\newcommand{\R}{\mathbb{R}}
\newcommand{\C}{\mathbb{C}}
\newcommand{\N}{\mathbb{N}}
\renewcommand{\P}{\mathbb{P}}
\newcommand{\OO}{\mathcal{O}}
\newcommand{\KK}{\mathcal{K}}
\newcommand{\HH}{\mathcal{H}}
\newcommand{\PP}{\mathcal{P}}
\newcommand{\NN}{\mathcal{N}}
\DeclareMathOperator{\ord}{ord}
\DeclareMathOperator{\mult}{mult}
\DeclareMathOperator{\codim}{codim}
\DeclareMathOperator{\dist}{dist}
\DeclareMathOperator{\reg}{reg}
\DeclareMathOperator{\sing}{sing}
\DeclareMathOperator{\Supp}{Supp}
\DeclareMathOperator{\Int}{Int}
\theoremstyle{plain}
\newtheorem{thm}{Theorem}[section]
\newtheorem{dfn}[thm]{Definition}
\newtheorem{lem}[thm]{Lemma}
\newtheorem{cor}[thm]{Corollary}
\newtheorem{rmk}[thm]{Remark}
\newtheorem{exm}[thm]{Example}
\newcommand{\Xreg}{X_{\text{reg}}}
\newcommand{\Xsing}{X_{\text{sing}}}
\numberwithin{equation}{subsection}
\begin{document}
\title{Lelong numbers on projective varieties}
\author{Manuel Rodrigo Parra}
\maketitle

\begin{abstract}

Given a positive closed (1,1)-current $T$ defined on the regular locus of a projective variety $X$ with bounded mass near the singular part of $X$ and $Y$ an irreducible algebraic subset of $X$, we present uniform estimates for the locus inside $Y$ where the Lelong numbers of $T$ are larger than the generic Lelong number of $T$ along $Y$.\\

\end{abstract}


\section{Introduction}

The understanding of positive closed currents have become an important task in complex analytic geometry due to the rich properties they have. In particular, positive closed currents generalize analytic sets in a natural way, but unlike varieties, they have good compactness properties and we can also use the vast set of tools arising from pluripotential theory.\\

A key step in the study of positive closed currents is the analysis of their singular locus, namely, the locus where the Lelong numbers are large.\\

In the fundamental paper \cite{SiuAnalytic}, Y. T. Siu proved the Zariski upper semi-continuity of Lelong numbers, i.e. if $T$ is a positive closed current on a compact complex manifold $X$, then for any $c>0$ the upper level set

$$\{x\in X\mid \nu(T,x)\geq c\}\subset X$$ is analytic, where $\nu(T,x)$ denotes the Lelong number of $T$ at $x$. Using this, it is also possible to prove Siu's decomposition theorem, that is, any positive closed $(p,p)$-current $T$ can be written as a (possibly infinite) sum of currents of integration on analytic sets plus a residue. Namely,

$$T=\sum_{j\geq1}\lambda_j[A_j]+R$$ where $A_j$ are irreducible analytic subsets of codimension $p$ in $X$, the constants $\lambda_j>0$ are the (generic) Lelong numbers of $T$ along $A_j$ and $R$ is a positive closed $(p,p)$-current with singular locus of small size. This gives us a quite concrete picture, in codimension $p$, of the structure where the Lelong numbers are big for the current $T$.\\

In order to size how big this singular locus is, various consequences can be derived from Demailly's results obtained in \cite{DemRegCurr}. For instance, if $X$ is a compact K\"ahler manifold and $\{x\in X\mid \nu(T,x)>0\}$ is a countable subset of $X$ (i.e. $\{\nu(T,x)\geq c\}$ is finite for all $c>0$) then

$$\sum_{x\in X}\nu(T,x)^{\dim(X)}\leq\int_X\{T\}^{\dim(X)}<+\infty.$$ Note that the above inequality in the case $X$ a Riemann surface (hence $T$ a positive finite measure) is trivial.\\

In this paper we present a result concerning estimates of the locus of positive Lelong numbers for positive closed currents defined on projective varieties with respect to a fixed irreducible subvariety. More precisely, let $X$ be a (possibly singular) projective variety and let $T$ be a positive closed (1,1)-current on $\Xreg$ with bounded mass near $\Xsing$ (see Definition \ref{boundedmasssingular}) and $Y\subset X$ an irreducible algebraic subset of codimension $l$ in $X$. We want to study the locus inside $Y$ where the Lelong numbers of $T$ are larger than the generic Lelong number of $T$ along $Y$. For every $c>0$ we denote by $E_c(T)$ the Lelong upper level sets of $T$ defined as the analytic subset

$$E_c(T):=\overline{\{x\in \Xreg\mid \nu(T,x)\geq c\}}$$  and by $E_c^Y(T)=E_c(T)\cap Y$ the Lelong upper level sets of $T$ at $Y$. Let $$0\leq\beta_1\leq\beta_2\ldots\leq\beta_{\dim(X)-l+1}$$ be the \emph{jumping numbers of $E_c^T(T)$}, i.e. for every $c\in]\beta_p,\beta_{p+1}]$ the algebraic set $E_c^Y(T)$ has codimension $p$ in $Y$ with at least one component of codimension exactly $p$. Let $\{Z_{p,r}\}_{r\geq1}$ be the countable collection of irreducible components of $\bigcup_{c\in]\beta_p,\beta_{p+1}]}E_c^Y(T)$ of codimension exactly $p$ in $Y$ and denote by $\nu_{p,r}$ the generic Lelong number of $T$ at $Z_{p,r}$. Note that $\beta:=\nu(T,Y)$ the generic Lelong number of $T$ along $Y$ corresponds to $\beta_1$. Then we obtain the main result of this paper

\begin{thm}\label{maintheorem}
With the same notation as above, there exist a positive constant $C$, depending only on the geometry of $X$ and $Y$, such that

$$\sum_{r\geq1}(\nu_{p,r}-\beta)^p\int_{Z_{p,r}}\omega^{k-l-p}\leq C\int_{Xreg}T\wedge\omega^{\dim(X)-1},$$ for all $p=1,\ldots,\dim(X)-l+1,$ where $\omega$ is the Fubini-Study metric of $X$.
\end{thm}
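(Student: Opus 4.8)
The strategy is to reduce the problem on the projective variety $X$ to the known estimates on projective space, then perform a diagonal slicing / intersection argument with a generic linear space to turn the codimension-$p$ statement into a codimension-one (measure-theoretic) one. First I would embed $X\hookrightarrow\P^N$ so that $\omega$ is the restriction of the Fubini–Study form, and extend $T$ by trivial mass across $\Xsing$ using the bounded-mass hypothesis (Definition \ref{boundedmasssingular}) together with the standard Skoda–El Mir type extension, obtaining a positive closed $(1,1)$-current $\tilde T$ on $\P^N$ with mass controlled by $\int_{\Xreg}T\wedge\omega^{\dim X-1}$ up to a constant depending only on $X$. This reduces everything to a statement about $\tilde T$ and the subvariety $Y\subset\P^N$; the Lelong numbers and the sets $Z_{p,r}$ are unchanged on $\Xreg$, and since the $Z_{p,r}$ lie in $Y$ which may meet $\Xsing$, I would need a separate argument (using that $E_c^Y(T)$ is defined as a closure and that the generic Lelong number is computed on the dense regular part) to see that the generic Lelong numbers $\nu_{p,r}$ are detected by $\tilde T$ as well.

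The core estimate is the case $p=\dim Y=k-l$ first, or more precisely the general $p$ handled uniformly. The idea is: for $c\in\,]\beta_p,\beta_{p+1}]$, the set $E_c^Y(T)$ has pure codimension $p$ components $Z_{p,r}$, and along each of them $T$ has generic Lelong number $\nu_{p,r}\geq c>\beta$. Intersect $Y$ with a generic linear subspace $L\subset\P^N$ of dimension $l+p$ (so $\dim(Y\cap L)=p$ is forced down by one at each $Z_{p,r}$, i.e. we want $L$ of the right dimension so that $Y\cap L$ is a curve meeting each $Z_{p,r}$ — concretely take $\dim L = N-k+l+p-1$... I would fix indices so that $Y\cap L$ is $1$-dimensional and meets each $Z_{p,r}$ in finitely many points). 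By the Siu semicontinuity and the slicing theorem of currents, for generic $L$ the restriction $\tilde T|_{L}$ (or rather a suitable wedge with the current of integration $[L]$, using Demailly's theory of intersection of currents with linear subspaces) is again positive closed on $L$, its mass is $\int \tilde T\wedge\omega^{N-1}$ times a universal constant, and its Lelong number at a generic point of $Z_{p,r}\cap L$ is at least $\nu_{p,r}$. On the curve $C=Y\cap L$ one then applies the elementary Riemann-surface inequality from the introduction to the measure $\tilde T|_C$ (a positive measure of finite mass), yielding $\sum_r \nu_{p,r}\cdot\#(Z_{p,r}\cap L)\leq \mathrm{mass}$. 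Integrating this over the generic $L$ (i.e. over the Grassmannian, using that $\#(Z_{p,r}\cap L)$ on average equals $\deg Z_{p,r}\cdot(\text{const})$ and that $\int_{Z_{p,r}}\omega^{k-l-p}=\deg Z_{p,r}$) converts the point count into the volume $\int_{Z_{p,r}}\omega^{k-l-p}$.

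To get the $p$-th power $(\nu_{p,r}-\beta)^p$ rather than a first power, the slicing must be iterated $p$ times, or equivalently one intersects with $[L]$ where $\mathrm{codim}\,L = p$ and uses that the self-intersection / the $p$-fold Lelong number along $Z_{p,r}$ picks up $\nu^p$; the subtraction of $\beta$ comes from first subtracting the current of integration along $Y$ — writing $T = \beta[Y] + T'$ (Siu decomposition relative to $Y$) so that $T'$ has generic Lelong number $0$ along $Y$ but Lelong number $\geq \nu_{p,r}-\beta$ along each $Z_{p,r}$ — and applying the above to $T'$. I would need the mass bound $\int T'\wedge\omega^{\cdot}\leq\int T\wedge\omega^{\cdot}$, which holds since $\beta[Y]$ is positive. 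The main obstacle I anticipate is making the generic-slicing argument rigorous on the singular variety $X$: controlling the slices near $\Xsing\cap Y$, ensuring the generic linear space avoids the bad loci where slicing fails and simultaneously meets every $Z_{p,r}$ transversally, and verifying that the constant $C$ produced this way genuinely depends only on the embedding data of $X$ and $Y$ (degrees, dimension) and not on $T$. A secondary technical point is justifying the passage from the countable sum over $r$ to a finite estimate — this follows from monotone convergence once each finite partial sum is bounded by the $T$-independent constant times the mass.
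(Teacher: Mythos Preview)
Your proposal has several genuine gaps that prevent it from going through.

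First, the decomposition $T=\beta[Y]+T'$ with $T'\geq 0$ is only available when $Y$ has codimension~$1$ in $X$; for $l>1$ the bidegrees do not match and one cannot subtract $\beta[Y]$ from a $(1,1)$-current. The paper stresses exactly this point (see the remark following the examples). So your mechanism for producing the shift by~$\beta$ collapses in the general case the theorem is meant to cover.

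Second, and more fundamentally, the step ``apply the Riemann-surface inequality to $\tilde T|_C$ on the curve $C=Y\cap L$'' begs the question. Restricting $T$ (or $T\wedge[L]$) to a curve inside $Y$ requires the local potentials of $T$ to be not identically $-\infty$ along~$Y$, and there is no reason for this to hold; indeed the whole difficulty of the theorem is that $T$ cannot in general be restricted to~$Y$. Slicing with a generic linear $L$ does not help, because $C\subset Y$ for every~$L$. Relatedly, your ``extend $\tilde T$ to a positive closed $(1,1)$-current on $\P^N$'' is not well-posed: El~Mir extends across $X_{\sing}$ inside~$X$, but a $(1,1)$-current on~$X$ does not become a $(1,1)$-current on the ambient~$\P^N$.

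Third, iterated slicing with linear subspaces does not produce the power $(\nu_{p,r}-\beta)^p$. Wedging with $[L]$ keeps the Lelong number $\nu(T,x)$ at a generic $x\in L$; it does not square or cube it. The $p$-th power in the paper comes from wedging $[Y]$ with $p$ copies of (regularizations of)~$T$ and using $\nu(T_1\wedge T_2,x)\geq\nu(T_1,x)\nu(T_2,x)$.

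The missing idea is Demailly's attenuation-of-Lelong-numbers regularization (Theorem~\ref{attenuatedlelongnumbers}): one approximates $T$ by currents $T_{c,m}$ with \emph{bounded} potentials away from $E_c(T)$ and with Lelong numbers shifted down by~$c$. These approximations \emph{can} be wedged with $[Y]$, and taking $c\searrow\beta$ is what produces the subtraction of~$\beta$. The paper then builds $\Theta_p=[Y]\wedge(\text{approx.\ of }T)^{\wedge p}$ inductively, passes to weak limits, and reads off the inequality from Siu's decomposition of $\Theta_p$. The singular case is handled by a resolution of singularities (not by embedding into $\P^N$), pulling $T$ back, and comparing degrees on the resolution with degrees on~$X$.
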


As an example, consider the case where $X$ is the complex projective plane $\P^2$, $Y\subset\P^2$ an irreducible curve and $T$ a positive closed (1,1)-current on $\P^2$ with Lelong number $\beta=\nu(T,Y)$ along $Y$; by Siu's decomposition theorem, it is easy to see that $T-\beta[Y]$ is also a positive closed (1,1)-current. Let's pretend that we can restrict $T-\beta[Y]$ at $Y$, i.e. $T-\beta[Y]$ admits local potentials that are not identically $-\infty$. Then we can study the locus of $T$ inside $Y$ directly. Unfortunately, it is not always possible to restrict currents to a given subvariety, therefore having an estimate as the one of Theorem \ref{maintheorem} is useful even when $Y$ has codimension 1. Moreover, if the codimension of $Y$ in $X$ is bigger than one (say $X=\P^3$ and $Y$ an irreducible curve in $\P^3$) then it is not even possible to use Siu's decomposition theorem since the dimensions do not match. If we take $X$ to be smooth, $Y=X$ (so $\beta=0$) we then reproduce one of the results obtained by J. P. Demailly in \cite{DemNumCrit}; it is important to remark that the results obtained in \cite{DemNumCrit} in the smooth case are more precise, since Demailly managed to bound the singular locus of the given positive closed (1,1)-current together with its absolutely continuous part (with respect to the Lebesgue measure) giving a more precise estimate. This happens to be useful for applications in Algebraic Geometry but for our purposes, which are motivated by holomorphic dynamical systems, our inequality will be enough since this will allow us to prove equidistribution results once we know how to control the singular locus of the critical set of iterates of holomorphic maps. The details of this will appear elsewhere. For more examples and details, see Section 4.\\

In section 2 we present some basic facts on positve closed currents and Lelong numbers needed in the sequel. In section 3 we give a fairly detailed sketch of how to approximate positive closed (1,1)-currents by closed currents with analytic singularities and attenuated Lelong numbers; this technique represents the most important tool in our proof. In section 4 we present a few examples in order to show the usefullness of our result and at the end we give the proof of our main result.\\

\newpage


\section{Basic facts on positive closed currents}

In this section we introduce the main results concerning positive closed currents and Lelong numbers. The basic reference for this section will be the book \cite{DemBook}, chapter III unless otherwise stated.

\subsection{Lelong numbers}

The main tool we have in order to 'measure' the size of the singular locus of a current are the Lelong numbers. Let $X$ be a compact K\"ahler manifold with K\"ahler form $\omega$ and let $T$ be a positive closed $(p,p)$-current on $X$. By definition, the $(k,k)$-form

$$\sigma_T:=T\wedge\omega^{k-p}$$ is a finite positive measure on $X$.\\

If $x\in X$ and $B(x,r)$ is an Euclidean ball with center $x$ and radius $r>0$, then the function

$$r\mapsto \nu(T,x,r):=\frac{\sigma_T\left(B(x,r)\right)}{\pi^pr^{2p}}$$ is increasing in $r>0$. We define the \emph{Lelong number} $\nu(T,x)$ of $T$ at $x$ as the limit

$$\nu(T,x):=\lim_{r\to0^+}\nu(T,x,r).$$

This limit always exists and $\nu(T,x)$ does not depend on neither the chosen local chart nor $\omega$. The quantity defined above can be seen as a generalization of the multiplicity $\mult_x(Z)$ of a variety $Z$ at $x$. More precisely, if $Z$ is an irreducible analytic subvariety of $X$, then
$$\nu([Z],x)=\mult_x(Z),$$ where $[Z]$ denotes the current of integration along $Z$.\\

A very important feature of Lelong numbers is the upper semicontinuity in both variables, which can be obtained from its definition. But the upper semicontinuity of $\nu(T,\cdot)$ is remarkably stronger since it is not only true in the standard topology but also in the Zariski topology: For every positive closed $(p,p)$-current $T$ and every $c>0$ we denote by $E_c(T)$ the \emph{Lelong upper level set}

$$E_c(T):=\{x\in X\mid \nu(T,x)\geq c\}.$$ A fundamental theorem proved by Siu \cite{SiuAnalytic} states that $E_c(T)$ is always an analytic subset of $X$, hence $\nu(T,\cdot)$ is Zariski upper semicontinuous.\\

Note that by Siu's theorem, given any irreducible analytic subset $V$ of $X$, the quantity

$$\nu(T,V):=\min_{x\in V}\nu(T,x)$$ is equal to $\nu(T,x)$ for $x$ generic, i.e.  for $x$ outside a proper analytic subset of $V$. We define the \emph{Lelong number of $T$ along $V$} as $\nu(T,V)$.\\

As a consequence of Siu's theorem, it is possible to prove the following decomposition formula: If $T$ is a positive closed $(p,p)$-current, then there is a unique decomposition of $T$ as a (possibly infinite) weakly convergent series

$$T=\sum_{j\geq1}\lambda_j[A_j]+R$$ where $[A_j]$ is the current of integration over an irreducible analytic variety $A_j\subset X$ of codimension $p$, $\lambda_j>0$ the generic Lelong numbers of $T$ along $A_j$ and $R$ is a positive closed current such that for every $c>0$, the level set $E_c(R)$ has dimension strictly less than $\dim(X)-p$.\\

This formula (known as Siu's decomposition theorem) states that the singular locus of a positive closed current can be decomposed into a union of analytic subsets plus a residual part with small size.

\subsection{Extensions and intersections of currents}

We state here some known results on positive closed currents that we will need in this paper.\\

A subset $P\subset X$ is said to be \emph{complete pluripolar} if for every $x\in P$ there exist an open neighborhood $U\ni x$ and a plurisubharmonic function $u$ not identically $-\infty$ such that

$$P\cap U=\{z\in U\mid u(z)=-\infty\}.$$ In particular all analytic subsets of $X$ are closed complete pluripolar sets.

\begin{thm}[El Mir]\label{El Mir}
Let $P\subset X$ be a closed complete pluripolar subset and let $T$ be a positive closed current on $X\setminus P$ with bounded mass on a neighborhood of every point of $P$. Then, the trivial extension by zero of $T$ on $X$ is a positive closed current.
\end{thm}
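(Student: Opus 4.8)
The plan is to run the classical cut-off argument. Since the conclusion is local, I would first reduce to the case $X=\Omega$ a ball in $\C^n$ and $P=\{u=-\infty\}$ for a plurisubharmonic function $u$ on $\Omega$ with $u\not\equiv-\infty$, and, after shrinking $\Omega$ and subtracting a constant, $u\le-1$ on $\Omega$. Because $u\not\equiv-\infty$ the set $P$ is Lebesgue null, and because $T$ is positive with locally finite mass near $P$ its coefficients are complex measures of locally bounded variation near $P$ (dominated by the trace measure $T\wedge\omega^{n-p}$); hence the trivial extension $\tilde T$ is a well-defined positive current on $\Omega$, and only its $d$-closedness is in question. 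As $\tilde T$ is real this is equivalent to $\bar\partial\tilde T=0$, so I would aim to prove $\langle\tilde T,\bar\partial\psi\rangle=0$ for every smooth compactly supported test form $\psi$ of the suitable bidegree.

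The device is a sequence of bounded plurisubharmonic cut-offs vanishing near $P$: set $v_k:=1+\tfrac1k\max(u,-k)$, so that $v_k$ is plurisubharmonic, $0\le v_k<1$, $v_k\equiv0$ on the open neighbourhood $\{u<-k\}\supset P$, and $v_k\uparrow1$ pointwise on $\Omega\setminus P$. Dominated convergence against the finite mass of $\tilde T$ near $\operatorname{supp}\psi$ gives $\langle\tilde T,\bar\partial\psi\rangle=\lim_k\langle\tilde T,v_k\,\bar\partial\psi\rangle$, and the Leibniz identity $v_k\,\bar\partial\psi=\bar\partial(v_k\psi)-\bar\partial v_k\wedge\psi$ splits the $k$-th pairing. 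The form $v_k\psi$ has compact support in $\{u\ge-k\}\cap\operatorname{supp}\psi\subset\Omega\setminus P$, where $\tilde T$ coincides with the closed current $T$, so $\langle\tilde T,\bar\partial(v_k\psi)\rangle=0$ (the mild non-smoothness of $v_k$ being absorbed by approximating it decreasingly by smooth plurisubharmonic functions). Thus everything reduces to showing $\langle\tilde T,\bar\partial v_k\wedge\psi\rangle\to0$, and here positivity is essential: a Cauchy--Schwarz inequality for the positive current $\tilde T$ bounds $|\langle\tilde T,\bar\partial v_k\wedge\psi\rangle|^2$ by a constant depending only on $\psi$ and the mass of $\tilde T$ near $\operatorname{supp}\psi$, times the energy $\int i\,\partial v_k\wedge\bar\partial v_k\wedge T\wedge\omega^{n-p-1}$, so the whole problem becomes one of controlling this energy.

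For that I would pass to the Bedford--Taylor calculus: since $v_k\ge0$ is bounded plurisubharmonic and $T$ positive closed, $dd^c v_k\wedge T$ and $dd^c(v_k^2)\wedge T$ are well-defined positive measures, $i\,\partial v_k\wedge\bar\partial v_k\wedge T\le\tfrac12\,dd^c(v_k^2)\wedge T$, and these measures carry no mass on $P$ because $v_k$ vanishes near $P$. Integrating by parts and invoking the Chern--Levine--Nirenberg inequalities together with the $1/k$ scaling built into the cut-offs, one finds that the energy is forced to $0$; the geometric reason is that a complete pluripolar set has zero Monge--Amp\`{e}re capacity, which is exactly what allows cut-offs of arbitrarily small energy relative to $T$ to be produced. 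Letting $k\to\infty$ then yields $\langle\tilde T,\bar\partial\psi\rangle=0$, so $\tilde T$ is closed.

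The hard part is precisely this energy estimate. The difficulty is not the formal manipulation but making the integrations by parts and the comparison $i\,\partial v_k\wedge\bar\partial v_k\wedge T\le\tfrac12\,dd^c(v_k^2)\wedge T$ legitimate \emph{after} wedging with the current $T$, whose coefficients are merely measures: this is exactly the Bedford--Taylor theory of Monge--Amp\`{e}re operators of bounded plurisubharmonic functions against positive closed currents, together with its monotone convergence theorems and the Chern--Levine--Nirenberg estimates, which are what turn the pluripolarity of $P$ into the vanishing of the energy. The remaining point, that $\langle\tilde T,\bar\partial(v_k\psi)\rangle=0$, is routine once $v_k$ is approximated by smooth functions compatibly with the pairing against $T$. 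A detailed treatment is in \cite{DemBook}, Chapter III.
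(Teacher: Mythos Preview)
The paper does not prove this theorem: it is quoted without proof as a classical result of El Mir (and referred back to \cite{DemBook}, Chapter~III, for background), so there is no ``paper's own proof'' to compare against. Your sketch is the standard argument, essentially the one in \cite{DemBook}, and is correct in outline; in particular the cut-offs $v_k=1+\tfrac1k\max(u,-k)$, the Leibniz splitting, the Cauchy--Schwarz reduction to an energy term, and the Bedford--Taylor/Chern--Levine--Nirenberg estimate are exactly the right ingredients. Two small points worth tightening if you ever write this out in full: (i) in practice one uses $v_k^2$ (or a convex function of $v_k$) as the actual cut-off in the Leibniz step, so that the ``cross term'' already appears in the form $2v_k\,\bar\partial v_k\wedge\psi$ and the Cauchy--Schwarz bound pairs cleanly with $\int i\,\partial v_k\wedge\bar\partial v_k\wedge T\wedge\omega^{n-p-1}$; and (ii) the claim $\langle\tilde T,\bar\partial(v_k\psi)\rangle=0$ needs a word, since $v_k\psi$ is not smooth---one regularizes $v_k$ by a decreasing sequence of smooth psh functions and passes to the limit using the monotone convergence in Bedford--Taylor theory, which you already flag.
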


It is well known that for any irreducible analytic subset $A\subset X$, the current of integration $[A_{\reg}]$ has finite mass in a neighborhood of every point of $A_{\sing}$, hence the current of integration $[A]$, meaning its extension by zero through $A_{\sing}$, is a well defined positive closed current on $X$.\\

We finally discuss intersection of currents. Given an open set $\Omega\subset\C^k$, a plurisubharmonic function $\varphi$ and a positive closed current $T$ in $\Omega$, we would like to have a notion of intersection $\frac{\sqrt{-1}}{2\pi}\partial\bar\partial\varphi\wedge T$ on $\Omega$. More precisely, we would like to define

$$\frac{\sqrt{-1}}{2\pi}\partial\bar\partial\varphi\wedge T:=\frac{\sqrt{-1}}{2\pi}\partial\bar\partial(\varphi T).$$ The equation above does not always make sense but it is well defined as long as the sizes of the singular sets involved are not too big; note in particular that it is well defined if $\varphi$ is locally integrable with respect to the trace measure of $T$. We proceed to introduce a more general result concerning intersection of currents.\\

Let $T_1,\ldots,T_q$ be positive closed (1,1)-currents with local potentials $\varphi_1,\ldots,\varphi_q$ respectively. We denote by $L(\varphi_j)$ the \emph{unbounded locus} of $\varphi_j$, namely, the set 

$$L(\varphi_j):=\{x\in X\mid \varphi_j\text{ is not bounded near } x\}.$$ 

\begin{thm}\label{intersection of currents}
Let $\Theta$ be a positive, closed $(k-p,k-p)$-current. Assume that for any choice of indices $j_1<\cdots<j_m$ in $\{1,\ldots,q\}$ the set

$$L(\varphi_{j_1})\cap\cdots\cap L(\varphi_{j_m,})\cap\Supp(\Theta)$$ has $(2p-2m+1)$-Hausdorff measure zero. Then the wedge product $T_1\wedge\cdots\wedge T_q\wedge\Theta$ is well defined. Moreover, the product is weakly continuous with respect to monotone decreasing sequences of plurisubharmonic functions.
\end{thm}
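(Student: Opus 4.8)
The plan is to work locally, construct the product by an inductive recipe, and reduce well-definedness to a single $L^1$-integrability statement that the Hausdorff-measure hypotheses are designed to guarantee. First I would pass to a coordinate ball $\Omega\subseteq\C^k$ on which $T_j=\frac{\sqrt{-1}}{2\pi}\partial\bar\partial\varphi_j$ for plurisubharmonic $\varphi_j$; after shrinking $\Omega$ and subtracting pluriharmonic functions one may take $\varphi_j\le 0$. Keeping in mind that $\Theta$ has bidimension $(p,p)$, I would build the product from the right: set $\Theta_0:=\Theta$ and, for $s=1,\dots,q$,
$$\Theta_s:=\frac{\sqrt{-1}}{2\pi}\partial\bar\partial\bigl(\varphi_{q-s+1}\,\Theta_{s-1}\bigr),$$
with the understanding that this makes sense as soon as $\varphi_{q-s+1}$ is locally integrable for the trace measure $\sigma_{s-1}:=\Theta_{s-1}\wedge\omega^{p-s+1}$ of $\Theta_{s-1}$: then $\varphi_{q-s+1}\Theta_{s-1}$ is a current of order zero, so $\Theta_s$ is a well-defined closed current of bidimension $(p-s,p-s)$, with $\Supp\Theta_s\subseteq\Supp\Theta_{s-1}\subseteq\Supp\Theta$. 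One then defines $T_1\wedge\cdots\wedge T_q\wedge\Theta:=\Theta_q$. Independence of the chosen potentials is immediate (two local potentials of $T_j$ differ by a smooth pluriharmonic function, which only multiplies the relevant current by a smooth form); independence of the order of $T_1,\dots,T_q$ and compatibility with restriction — hence gluing of the local definitions to a global current on $X$ — I would get by approximation from the symmetry of the locally bounded Bedford--Taylor product.

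With this in place, everything reduces to verifying at each step that $\varphi_{q-s+1}\in L^1_{\mathrm{loc}}(\sigma_{s-1})$, together with positivity of $\Theta_s$ and a uniform local mass bound. For positivity and the mass bound I would truncate: with $\varphi^{(n)}:=\max(\varphi_{q-s+1},-n)\downarrow\varphi_{q-s+1}$, the forms $\frac{\sqrt{-1}}{2\pi}\partial\bar\partial\varphi^{(n)}\wedge\Theta_{s-1}=\frac{\sqrt{-1}}{2\pi}\partial\bar\partial(\varphi^{(n)}\Theta_{s-1})$ are positive by Bedford--Taylor, and for a cutoff $\chi\equiv 1$ near a compact $K\Subset\Omega$, Stokes gives
$$\int_K\frac{\sqrt{-1}}{2\pi}\partial\bar\partial\varphi^{(n)}\wedge\Theta_{s-1}\wedge\omega^{p-s}\ \le\ C_\chi\int_{\Supp\chi}(-\varphi^{(n)})\,d\sigma_{s-1}\ \le\ C_\chi\int_{\Supp\chi}(-\varphi_{q-s+1})\,d\sigma_{s-1}.$$
Once $\varphi_{q-s+1}\in L^1_{\mathrm{loc}}(\sigma_{s-1})$, dominated convergence gives $\varphi^{(n)}\Theta_{s-1}\to\varphi_{q-s+1}\Theta_{s-1}$ in mass, hence $\frac{\sqrt{-1}}{2\pi}\partial\bar\partial\varphi^{(n)}\wedge\Theta_{s-1}\to\Theta_s$ weakly; so $\Theta_s\ge 0$ and inherits the displayed mass bound.

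The heart of the argument, and the step I expect to be the main obstacle, is therefore the a priori estimate $\int_{\Supp\chi}(-\varphi_{q-s+1})\,d\sigma_{s-1}<\infty$. The naive Chern--Levine--Nirenberg bound is useless here, since it blows up with the truncation level, so the smallness of the unbounded loci must be exploited, and I would do this by an induction on $s$ (equivalently, on the number of currents already wedged in). Away from the unbounded loci $L(\varphi_{q-s+2}),\dots,L(\varphi_q)$ of the previously used potentials, the partial product $\Theta_{s-1}$ is a locally bounded Bedford--Taylor product, and the classical Chern--Levine--Nirenberg inequalities dispose of that contribution once $\varphi_{q-s+1}$ is controlled near its own polar set; the remaining mass of $\Theta_{s-1}$ is concentrated near $L(\varphi_{q-s+2})\cup\cdots\cup L(\varphi_q)$, and there one covers $L(\varphi_{q-s+1})\cap\Supp\Theta_{s-1}$ by balls $B(x_i,r_i)$ with $\sum_i r_i^{\,2(p-s+1)-1}$ as small as desired. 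Producing such coverings is exactly where the multi-fold hypotheses enter: at the $m$-th inductive level the vanishing of $\HH^{\,2p-2m+1}\bigl(L(\varphi_{j_1})\cap\cdots\cap L(\varphi_{j_m})\cap\Supp\Theta\bigr)$ is what lets the relevant portion of $\Supp\Theta_{s-1}$ be covered efficiently. On each ball one then estimates $\int_{B(x_i,r_i)}(-\varphi_{q-s+1})\,d\sigma_{s-1}$ by combining the monotonicity of the density $\rho\mapsto\sigma_{s-1}(B(x_i,\rho))/\rho^{\,2(p-s+1)}$ — the Lelong-type density monotonicity for the closed positive current $\Theta_{s-1}$, which gives $\sigma_{s-1}(B(x_i,\rho))\le c_i\,\rho^{\,2(p-s+1)}$ — with the Hausdorff bound controlling the deep-pole part; summing over $i$ yields the finiteness. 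The genuinely delicate points are this covering-plus-density estimate and the bookkeeping that makes the Hausdorff exponents $2p-2m+1$ in the hypothesis line up, through the induction, with the bidimensions of the successive partial products.

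Granting the estimate, the asserted weak continuity under monotone decreasing sequences $\varphi_j^{[l]}\downarrow\varphi_j$ comes essentially for free: since $\varphi_j^{[l]}\ge\varphi_j$ one has $L(\varphi_j^{[l]})\subseteq L(\varphi_j)$, so the Hausdorff hypotheses — and hence all the mass bounds above — hold uniformly in $l$; one then passes to the limit layer by layer in the inductive recipe, exactly as in the truncation argument, to get $T_1^{[l]}\wedge\cdots\wedge T_q^{[l]}\wedge\Theta\to T_1\wedge\cdots\wedge T_q\wedge\Theta$ weakly.
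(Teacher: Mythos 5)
First, a remark on the comparison itself: the paper does not prove this statement at all — it is quoted as background from \cite{DemBook}, Chapter III — so your proposal is really measured against the standard proof there. Your skeleton does follow that proof: define the product inductively from the right via $\Theta_s=\frac{\sqrt{-1}}{2\pi}\partial\bar\partial(\varphi_{q-s+1}\Theta_{s-1})$, reduce everything to $\varphi_{q-s+1}\in L^1_{\mathrm{loc}}(\sigma_{s-1})$, get positivity and mass bounds by truncation and integration by parts against a cutoff, and obtain monotone continuity from the uniformity of the estimates. All of that is the right frame.

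The genuine gap is at the step you yourself call the main obstacle, and the mechanism you sketch for it does not close. The two ingredients you invoke near the poles — the trace-measure growth $\sigma_{s-1}(B(x,\rho))\le c\,\rho^{2(p-s+1)}$ coming from Lelong monotonicity, and coverings of $L(\varphi_{q-s+1})\cap\Supp\Theta_{s-1}$ by balls with $\sum_i r_i^{2(p-s+1)-1}$ arbitrarily small — combine only to show that $\sigma_{s-1}$ puts \emph{zero mass} on the polar set; they give no rate of decay for $\sigma_{s-1}(\{\varphi_{q-s+1}<-t\})$ and hence do not bound $\int_{B(x_i,r_i)}(-\varphi_{q-s+1})\,d\sigma_{s-1}$. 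Indeed, for a general positive measure with that volume growth and with polar set meeting its support in an $\mathcal{H}_{2(p-s+1)-1}$-null set, the integral can be infinite (concentrate mass $4^{-k}$ on small disks inside $\{z_1=e^{-8^k}\}$ accumulating at a single point of $\{z_1=0\}$ and take $\varphi=\log|z_1|$); so the closedness and pluripotential structure of the partial products must enter quantitatively, through CLN/Jensen-type inequalities in which the relevant norm is $\int(-\varphi)\,d\sigma_{s-1}$ over larger balls, interleaved inductively with the uniform mass bounds for the truncated products — this interleaving is the actual content of the proof in \cite{DemBook}. A second, related gap is the exponent bookkeeping you defer: at step $s$ what your covering argument needs is smallness of $L(\varphi_{q-s+1})\cap\Supp\Theta_{s-1}$ in dimension $2(p-s+1)-1$, which is \emph{not} among the hypotheses; the hypotheses control intersections of the $L(\varphi_j)$ with $\Supp\Theta$, and converting them into estimates on where the mass of the partial product $\Theta_{s-1}$ can concentrate (by stratifying a neighborhood of the poles according to how many of the previously used unbounded loci one is near) is precisely the delicate inductive step, absent from the sketch. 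So the plan is the standard and correct one, but the estimate that makes the theorem true is still missing.
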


We end this subsection with a useful comparison of Lelong numbers of products of currents: If $T_1$ is a positive closed (1,1)-current and $T_2$ is a positive closed $(p,p)$-current such that the product $T_1\wedge T_2$ (which is given locally by the local potentials of $T_1$) is well defined, then 

\begin{equation}\label{lelongproduct}
\nu(T_1\wedge T_2,x)\geq\nu(T_1,x)\nu(T_2,x)
\end{equation} for every $x\in X$.

\subsection{Currents on singular projective varieties}

We will need to deal with positive closed currents defined on singular varieties. If $X$ is a projective variety and $\iota:X\hookrightarrow\P^N$ an embedding, we will say that $\omega$ is a Fubini-Study form on $X$ if $\omega=\iota^*\omega_{\P^N}|_X$, where $\omega_{\P^N}$ is the Fubini-Study form on $\P^N$. Note that $\omega$ is a positive smooth differential form on $\Xreg$.

\begin{dfn}\label{boundedmasssingular}

If $X$ is a (possibly singular) projective irreducible variety and $T$ is a positive closed $(p,p)$-current defined on $\Xreg$, we will say that $T$ has \emph{bounded mass around $\Xsing$} if there exist an open neighborhood $U$ of $\Xsing$ such that

$$\int_{U\cap \Xreg}T\wedge\omega^{\dim(X)-p}<+\infty.$$ 

\end{dfn}

In a complex manifold, for any given two hermitian forms $\omega$ and $\omega'$ there is always a positive constant $A$ such that $A^{-1}\omega\leq\omega'\leq A\omega$, in particular it is easy to see that above definition does not depend of the embedding of $X$.\\

\newpage


\section{Approximation of (1,1)-currents}

In this section we will discuss the approximation of (1,1)-currents by currents with analytic singularities. The entire section is based on the work of J. P. Demailly (particularly \cite{DemNumCrit}, \cite{DemRegCurr}). However, we add some details of the proof of Theorems \ref{approximation} and \ref{attenuatedlelongnumbers} since these techniques are crucial for this work and the author believes that they are not very well known.\\

The main ingredients of the approximation are the mean value inequality and the Ohsawa-Takegoshi $L^2$-extension theorem

\begin{thm}[Ohsawa-Takegoshi's $L^2$-Extension Theorem]\label{OT}
Let $X$ be a projective manifold. Then there is a positive line bundle $A\to X$ over $X$ with smooth hermitian metric $h_A$ and a constant $C>0$ such that for every line bundle $L\to X$ provided with a singular hermitian metric $h_L$ and for every $x\in X$ such that $h_L(x)\neq0$, there exist a section $\sigma$ of $L+A$ such that 

$$\|\sigma\|_{h_L\otimes h_A}\leq C|\sigma(x)|.$$
\end{thm}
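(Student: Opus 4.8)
The assertion is the pointwise (zero–dimensional) form of the Ohsawa--Takegoshi--Manivel $L^2$-extension theorem, and the plan is to derive it from the classical extension theorem applied to the submanifold $\{x\}\hookrightarrow X$, twisted by a \emph{fixed} ample line bundle that both absorbs $K_X$ and supplies a uniform lower curvature bound, and then to make the constant uniform in $x$, $L$ and $h_L$ by a compactness argument. As is implicit in the intended application — where $\Theta_{h_L}$ is the positive closed $(1,1)$-current to be regularized — one takes $h_L$ to have semipositive curvature current, so locally $h_L=e^{-\varphi_L}$ with $\varphi_L$ plurisubharmonic, and $h_L(x)\ne 0$ means $\varphi_L(x)>-\infty$, i.e. $x$ avoids the complete pluripolar locus of $h_L$. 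Throughout we read $\|\sigma\|_{h_L\otimes h_A}$ as the global $L^2$-norm $\big(\int_X|\sigma|^2_{h_L\otimes h_A}\,dV_\omega\big)^{1/2}$ and $|\sigma(x)|$ as $|\sigma(x)|_{(h_L\otimes h_A)(x)}$, which by the above is finite and positive whenever $\sigma(x)\ne 0$.

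First I would fix a Fubini--Study form $\omega$ on $X$ and an ample line bundle $A_0\to X$ with a smooth metric $h_{A_0}$ satisfying $\Theta_{h_{A_0}}\ge\varepsilon_0\,\omega$ for some $\varepsilon_0>0$, requiring $A_0$ positive enough that the curvature hypothesis below holds — a requirement depending only on $(X,\omega)$. Set $A:=K_X\otimes A_0$ with $h_A:=h_{K_X,\omega}\otimes h_{A_0}$, a smooth metric; enlarging $A_0$ one may also assume $A$ ample, so $A$ is a positive line bundle as required. Given $L$, $h_L$ and an admissible $x$, write $L+A=K_X+E$ with $E:=L+A_0$, $h_E:=h_L\otimes h_{A_0}$, so that
\[ \Theta_{h_E}=\Theta_{h_L}+\Theta_{h_{A_0}}\ \ge\ \Theta_{h_{A_0}}\ \ge\ \varepsilon_0\,\omega\ >\ 0, \]
the essential point being that this lower bound is independent of $L$ and $h_L$ since $\Theta_{h_L}\ge 0$. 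Choose holomorphic coordinates $z=(z_1,\dots,z_n)$, $n=\dim X$, on a coordinate ball $B(x,r_0)$ centered at $x$, with $r_0$ valid for a fixed finite atlas of $X$. Viewing $S=\{x\}$ as a submanifold of codimension $n$, a holomorphic section of $(K_X+E)|_S$ is just an element $v\in(K_X+E)_x=(L+A)_x$, and its ``$L^2$-norm over $S$'' equals $|v|_{(h_L\otimes h_A)(x)}$ up to a normalization factor (coordinate Jacobian, volume of the ball, the combinatorial constant of the Ohsawa--Takegoshi kernel) controlled by the fixed atlas. The classical Ohsawa--Takegoshi--Manivel theorem for the $0$-dimensional submanifold $\{x\}$ then produces a global holomorphic section $\sigma$ of $K_X+E=L+A$ with $\sigma(x)=v$ and $\int_X|\sigma|^2_{h_L\otimes h_A}\,dV_\omega\le C_x\,|v|^2_{(h_L\otimes h_A)(x)}$; its hypothesis is precisely the semipositivity of $\Theta_{h_E}$ together with enough curvature to absorb the error produced when the local point-weight $n\log|z-x|^2$ is cut off to a global function on $X$, and this is guaranteed once $\varepsilon_0$ (hence $A_0$) is large enough relative to the fixed geometry. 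Taking $v\ne 0$ with $|v|_{(h_L\otimes h_A)(x)}=1$, which is possible because $h_L(x)\ne 0$, yields $\|\sigma\|_{h_L\otimes h_A}\le C_x^{1/2}\,|\sigma(x)|$.

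It then remains to bound $C_x$ uniformly, and this is the step I expect to be the main obstacle. In the extension theorem the constant depends only on $n$, on the fixed atlas data ($r_0$, the coordinate Jacobians, and the reference metrics $\omega$, $h_{K_X,\omega}$, $h_{A_0}$ restricted to the ball) and on the curvature lower bound $\varepsilon_0$ — but \emph{not} on $h_L$ beyond the inequality $\Theta_{h_L}\ge 0$, since the extra curvature of $h_L$ only helps. Covering the compact $X$ by finitely many coordinate patches in each of which an arbitrary point may serve as the center $x$ (a standard consequence of compactness), and letting $C$ be the supremum of $C_x^{1/2}$ over this finite configuration times the fixed normalization factor, one obtains a single $C$, depending only on $X$ (through $\omega$ and the chosen $A$), valid for all $L$, all $h_L$ and all admissible $x$, which is the assertion. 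The difficulty here is structural rather than computational: one must check that one ample $A_0$ and one constant $C$ serve \emph{simultaneously} for every semipositively curved singular metric $h_L$, and the resolution is exactly that $h_L$ enters the curvature requirement only with a favorable sign, so a fixed $A_0$ dominating the finitely many ``universal'' negative contributions (curvatures of the reference metrics and the curvature error from globalizing the point-weight) does the job, after which compactness of $X$ controls the constant. A subsidiary point to treat carefully in the write-up is the identification of ``extending a value at $x$'' with the codimension-$n$ case of Ohsawa--Takegoshi--Manivel, together with the precise meaning of $|\sigma(x)|$ — which is exactly where the hypothesis $h_L(x)\ne 0$ is used.
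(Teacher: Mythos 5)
The paper never proves Theorem \ref{OT}: it is quoted as a known result and the reader is referred to \cite{DrorNotes} (the statement and its use also follow \cite{DemNumCrit}), so there is no internal proof to compare against. Judged on its own terms, your sketch is essentially the standard derivation given in those references, and I see no gap in the ideas. You correctly make explicit the two points the loose statement leaves implicit and that the application in Theorem \ref{approximation} actually needs: that $h_L$ is taken with semipositive curvature current, so that it enters the $L^2$ estimates only with a favorable sign --- which is precisely why one ample $A$ and one constant $C$ can serve simultaneously for all pairs $(L,h_L)$ --- and that $\|\sigma\|_{h_L\otimes h_A}$ is the global $L^2$ norm while $|\sigma(x)|$ is the pointwise norm in the singular metric, finite and nonzero exactly because $h_L(x)\neq 0$ (note that without the semipositivity assumption and the requirement $\sigma(x)\neq 0$ the statement as printed is vacuous, since $\sigma=0$ satisfies it). Your mechanism --- set $A=K_X\otimes A_0$ with $A_0$ ample enough to absorb $K_X$, the reference curvatures and the negative contribution of the cut-off point weight $n\log|z-x|^2$; extend a unit vector $v\in(L+A)_x$ by Ohsawa--Takegoshi on a coordinate ball followed by a global $\bar\partial$-correction; and get uniformity in $x$ from a finite atlas on the compact $X$ --- is exactly how the uniform pointwise statement is proved. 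If you write it up in full, spell out the one step you gloss over: the correction $u$ solving the $\bar\partial$-equation with the weight $e^{-2n\log|z-x|}$ must vanish at $x$ (local integrability of $|u|^2e^{-2n\log|z-x|}$ forces $u(x)=0$), which is where the exponent $n$ in the point weight is used and why the extended section still satisfies $\sigma(x)=v\neq 0$.
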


For a proof of Theorem \ref{OT} see for example \cite{DrorNotes}.

\subsection{Approximation by divisors}

Let $X$ be a projective manifold and let $T$ be a positive closed current representing the first Chern class $c_1(L)$ of a hermitian line bundle $L\to X$. More precisely, we can endow $L\to X$ with a singular hermitian metric $h_L$ and curvature form $\Theta(h_L)$ where

$$T\in c_1(L)=\left\{\Theta(h_L)\right\}.$$ Now, let $A\to X$ be an ample line bundle with smooth hermitian metric $h_A=e^{-\varphi_A}$. Its positive curvature form $\omega:=\frac{\sqrt{-1}}{2\pi}\partial\bar\partial\varphi_A$ endows $X$ with a K\"ahler metric. We can fix a smooth hermitian metric $h$ on $L$, hence we can write $h_L=he^{-2\varphi}$ and $T=\Theta(h_L)=\Theta(h)+\frac{\sqrt{-1}}{2\pi}\partial\bar\partial\varphi$. We endow $mL+A$ with the (singular) metric $h_L^{\otimes m}\otimes h_A$ and we define the (finite dimensional) Hilbert space $\HH_m\subset H^0(X;\OO_X(mL+A))$ as

$$\HH_m:=\left\{\sigma\in H^0(X;\OO_X(mL+A))\mid \|\sigma\|^2_m<+\infty\right\}$$ where the norm $\|\cdot\|_m^2$ is given by

$$\|\sigma\|_m^2:=\int_X h_L^{\otimes m}\otimes h_A(\sigma)dV_{\omega}.$$

We present the following theorem which can be found in \cite{DemNumCrit} (see also \cite{Boucksonthesis}).

\begin{thm}\label{approximation}
Let $X$, $T$, $L\to X$, $\varphi$ and $A\to X$ be as before. Let $\{\sigma_{m,j}\}_{j=1}^{N_m}$ be an orthonormal basis of $\HH_m$ and define

$$\varphi_m(x):=\frac1{2m}\log\left(\sum_{j=1}^{N_m}h^{\otimes m}\otimes h_A(\sigma_{m,j}(x))\right).$$

Then there exist positive constants $C_1$ and $C_2$ independent of $m$ such that, for every $x\in X$ we have:

$$\varphi(x)-\frac{C_1}{m}\leq\varphi_m(x)\leq \sup_{z\in B(x,r)}\varphi(z)+\frac{C_2}{m}+C(x,r),$$ where $C(x,r)$ tends to 0 as $r\to0$.

\end{thm}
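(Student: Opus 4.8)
The plan is to establish the two-sided estimate for $\varphi_m$ by separating it into a lower bound, obtained from the Ohsawa--Takegoshi extension theorem, and an upper bound, obtained from the mean value inequality for plurisubharmonic functions. The key observation is that $\varphi_m$ is (up to the smooth weight $h$) the logarithm of the Bergman kernel of $\HH_m$ on the diagonal, so that
\begin{equation*}
\varphi_m(x) = \frac{1}{m}\sup\left\{\log|\sigma(x)|_{h^{\otimes m}\otimes h_A} : \sigma\in\HH_m,\ \|\sigma\|_m\leq 1\right\}^{+},
\end{equation*}
where the supremum is over the unit ball, $\log|\sigma(x)|$ is computed in the smooth metric, and we use that an orthonormal basis realizes the extremal function. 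I would record this extremal characterization first, since both bounds follow by comparing test sections against the weight $e^{-2m\varphi}$.

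For the lower bound, fix $x$ with $h_L(x)\neq 0$ (the complementary case being trivial since $\varphi(x)=-\infty$ there and the inequality is vacuous after the usual normalization). Apply Theorem \ref{OT} to the line bundle $mL$ with its singular metric $h^{\otimes m}e^{-2m\varphi}$: this produces a section $\sigma$ of $mL+A$ with $\|\sigma\|_{h^{\otimes m}e^{-2m\varphi}\otimes h_A}\leq C|\sigma(x)|$, where the right-hand norm at $x$ is taken in the smooth metric. Since $\|\sigma\|_m^2 = \int_X |\sigma|^2_{h^{\otimes m}e^{-2m\varphi}\otimes h_A}\,dV_\omega$ differs from the $L^2$-norm in the theorem only by the fixed smooth volume form and a comparison of the smooth metrics (bounded above and below by a constant independent of $m$, since $A$ is fixed), we get $\|\sigma\|_m\leq C'|\sigma(x)|$ with $C'$ independent of $m$. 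Normalizing and inserting into the extremal characterization yields $\varphi_m(x)\geq \frac{1}{m}\log|\sigma(x)/\|\sigma\|_m|_{h^{\otimes m}\otimes h_A} \geq \varphi(x) - \frac{1}{2m}\log(C')^2 = \varphi(x) - \frac{C_1}{m}$, after unwinding that $|\sigma(x)|^2_{h^{\otimes m}\otimes h_A} = e^{2m\varphi(x)}|\sigma(x)|^2_{h^{\otimes m}e^{-2m\varphi}\otimes h_A}$.

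For the upper bound, let $\sigma\in\HH_m$ be any unit vector. On a coordinate ball $B(x,r)$ trivialize $mL+A$ and write $\sigma = f\,e_L^{\otimes m}\otimes e_A$ for a holomorphic function $f$; the function $|f|^2 e^{-2m\varphi}$ times the smooth weights is plurisubharmonic up to a smooth correction, so by the mean value inequality (applied to $\log|f|$, which is plurisubharmonic) and the submean property of plurisubharmonic functions,
\begin{equation*}
\log|f(x)| \leq \frac{1}{\Vol(B(x,r))}\int_{B(x,r)}\log|f|\,dV \leq \frac{1}{2}\log\left(\frac{1}{\Vol(B(x,r))}\int_{B(x,r)}|f|^2 e^{-2m\varphi}\,dV\right) + \sup_{B(x,r)}\varphi,
\end{equation*}
where the middle step uses concavity of $\log$ and the last inserts $e^{-2m\varphi}$ at the cost of $\sup_{B(x,r)}m\varphi$, then divides by $m$ after the fact. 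The integral on the right is bounded by a constant (depending on $r$ and the smooth weights, comparing $dV$ with $dV_\omega$ and the local trivialization with the global metric — this is where the error $C(x,r)\to 0$ enters, coming from the oscillation of the smooth metric on $B(x,r)$) times $\|\sigma\|_m^2 = 1$. Dividing by $2m$ gives $\frac{1}{m}\log|\sigma(x)|_{h^{\otimes m}\otimes h_A}\leq \sup_{B(x,r)}\varphi + \frac{C_2}{m} + C(x,r)$; taking the supremum over unit $\sigma$ gives the stated bound.

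I expect the main obstacle to be bookkeeping rather than conceptual: carefully tracking how the fixed smooth metrics $h$ and $h_A$, the volume forms $dV$ and $dV_\omega$, and the local trivializations contribute constants that are genuinely independent of $m$ (crucial, since all the comparison constants for $A$ and $h$ do not grow with $m$, while the weight $e^{-2m\varphi}$ is handled exactly), and isolating the $r$-dependent, $m$-independent term $C(x,r)$ arising from the local oscillation of the smooth data on $B(x,r)$, which tends to $0$ with $r$ by continuity. A secondary point requiring care is the degenerate case $h_L(x)=0$, i.e. $\varphi(x)=-\infty$, where the lower bound must be interpreted correctly (one applies Ohsawa--Takegoshi only where the metric is finite, and the lower estimate holds trivially elsewhere in the limiting sense used in the applications).
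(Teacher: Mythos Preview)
Your proposal is correct and follows essentially the same route as the paper: the extremal (Bergman-kernel) characterization of $\varphi_m$, the lower bound from Ohsawa--Takegoshi, and the upper bound from the mean value inequality on a trivializing ball with the smooth oscillation of $h,h_A$ absorbed into $C(x,r)$. The only cosmetic difference is that the paper applies the $L^2$ mean value inequality directly to $|\sigma|^2$ rather than passing through the sub-mean property of $\log|f|$ and Jensen; note also that your displayed upper-bound chain is missing a factor of $m$ in front of $\sup_{B(x,r)}\varphi$ (your parenthetical remark shows you have the correct version in mind).
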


\begin{proof}[Proof of Theorem \ref{approximation}]

First we cover $X$ by finitely many small open balls $\{B\}$ giving local trivializations for both line bundles $A$ and $L$. On $L,\,A|_B\simeq B\times\C\subset\C^k\times\C$ we pick smooth metrics $\psi$, $\psi_A$ for $L$ and $A$ respectively, i.e. for all $(x,v)\in B\times\C$

$$h(x,v)=|v|^2e^{-2\psi(x)},\quad h_A(x,v)=|v|^2e^{-2\psi_A(x)},$$ hence if $\sigma\in\HH_m$ is a section supported on $B$ we have that

$$h^{\otimes m}\otimes h_A(\sigma(x))=|\sigma(x)|^2e^{-2m\psi(x)-2\psi_A(x)}.$$

Since $\sigma:B\to\C$ is holomorphic, by the mean value inequality for all $x\in B$ and $r<\dist(x,\partial B)$ we have

$$|\sigma(x)|^2\leq\frac{k!}{\pi^k r^{2k}}\int_{B(x,r)}|\sigma(z)|^2dV(z),$$ implying

\begin{multline}
h^{\otimes m}\otimes h_A(\sigma(x))\leq\frac{C}{r^{2k}}e^{-2m\psi(x)-2\psi_A(x)}\int_{B(x,r)}|\sigma(z)|^2dV(z)\leq\\\leq \frac{C}{r^{2k}}e^{2m[\sup_{B(x,r)}\psi-\psi(x)]+2[\sup_{B(x,r)}\psi_A-\psi_A(x)]}\int_{B(x,r)}h^{\otimes m}\otimes h_A(\sigma(z))dV(z).
\end{multline}
Denote by $c(x,r):=\sup_{B(x,r)}\psi-\psi(x)$, $c_A(x,r):=\sup_{B(x,r)}\psi_A-\psi_A(x)$ and note that

$$\int_{B(x,r)}h^{\otimes m}\otimes h_A(\sigma(z))dV(z)\leq\left(\sup_{B(x,r)}e^{2m\varphi}\right)\|\sigma\|^2_m,$$ therefore

\begin{equation}\label{eq1}
h^{\otimes m}\otimes h_A(\sigma(x))\leq \frac{C}{r^{2k}}e^{mc(x,r)+c_A(x,r)}\left(\sup_{B(x,r)}e^{2m\varphi}\right)\|\sigma\|^2_m.
\end{equation}

We can write $\varphi_m$ as

$$e^{2m\varphi_m(x)}=\sup_{\|\sigma\|=1}h^{\otimes m}\otimes h_A(\sigma(x));$$ therefore taking $\log$ of \eqref{eq1} and the supremum over $\|\sigma\|_m=1$ we obtain

\begin{multline}
2m\varphi_m(x)\leq\log\left(\frac{C}{r^{2k}}\right)+mc(x,r)+c_A(x,r)+2m\sup_{B(x,r)}\varphi\Longrightarrow\\\Longrightarrow \varphi_m(x)\leq \sup_{B(x,r)}\varphi+C(x,r)+\frac1{m}\log\left(\frac{C'}{r^{k}}\right),
\end{multline} where $C'>0$ and $C(x,r)\to0$ as $r\to0$.\\

For the other inequality we use Ohsawa-Takegoshi's $L^2$ Extension Theorem: Let $x\in X$ such that $h_L(x)\neq0$. Since $h_L=he^{-2\varphi}$ we can find a section $\sigma\in\HH_m$ such that

$$\|\sigma\|^2_m\leq C^2|\sigma(x)|^2=C^2h^{\otimes m}\otimes h_A(\sigma(x))e^{-2m\varphi(x)},$$ for some $C>0$. Using (again) that

$$e^{2m\varphi_m(x)}=\sup_{\|\sigma\|_m=1}h^{\otimes m}\otimes h_A(\sigma(x)),$$ we take $\log$ of the inequality and the supremum over $\|\sigma\|_m=1$ obtaining

$$\varphi(x)\leq\varphi_m(x)+\frac{C'}{m}.$$ This concludes the proof.

\end{proof} 

As a consequence of the theorem above, we have obtained the following corollary

\begin{cor}\label{analyticapproximation}
Let $X$ be a projective complex manifold and let $T$ be a positive closed (1,1)-current in the cohomology class of a line bundle. Then there exist a sequence of closed (1,1)-currents $T_m$ in the cohomology class of $T$ such that

\begin{itemize}
\item[(i)] $T_m\geq-\frac{1}{m}\omega$;

\item[(ii)] The sequence $T_m$ converges weakly to $T$;

\item[(iii)] For every $x\in X$ the Lelong numbers at $x$ satisfy

$$\nu(T,x)-\frac{C}{m}\leq\nu(T_m,x)\leq\nu(T,x),$$ for some $C>0$. In particular, the Lelong numbers $\nu(T_m,x)$ converge uniformly to $\nu(T,x)$.
\end{itemize}
\end{cor}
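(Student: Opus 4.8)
The plan is to derive Corollary \ref{analyticapproximation} from Theorem \ref{approximation} by choosing a suitable embedding of the cohomology class of $T$ into a Chern class of a line bundle and then passing from the ''potential estimates'' to ''current estimates'' and ''Lelong number estimates''. First I would set up the bundle data: since $\{T\}=c_1(L)$ for some line bundle $L\to X$, write $h_L=he^{-2\varphi}$ with $h$ smooth, so that $T=\Theta(h)+\frac{\sqrt{-1}}{2\pi}\partial\bar\partial\varphi$ with $\varphi$ quasi-psh. Fix the ample auxiliary bundle $A$ from the Ohsawa--Takegoshi setup with its metric $h_A=e^{-\varphi_A}$ and curvature $\omega=\frac{\sqrt{-1}}{2\pi}\partial\bar\partial\varphi_A>0$. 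Then apply Theorem \ref{approximation} to obtain the regularized potentials $\varphi_m$ and define
$$T_m:=\Theta(h)+\frac1m\Theta(h_A)+\frac{\sqrt{-1}}{2\pi}\partial\bar\partial\varphi_m,$$
so that $T_m$ represents $c_1(L)+\frac1m c_1(A)$; one then replaces $\frac1m\Theta(h_A)$ by $\frac1m\omega$ up to choosing representatives, and absorbs the resulting discrepancy into the error terms. (In the cohomology class of $T$ exactly one should instead subtract a correcting $\frac1m\omega$, i.e. work with $T_m':=T_m-\frac1m\omega$; I would make this bookkeeping precise so that $\{T_m\}=\{T\}$ holds on the nose.)

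Next I would establish (i): the lower bound $\varphi_m\geq\varphi-\frac{C_1}{m}$ from Theorem \ref{approximation} together with convexity of $\log\sum|\cdot|^2$ shows that $\frac1m\Theta(h_A)+\frac{\sqrt{-1}}{2\pi}\partial\bar\partial\varphi_m$ is a closed current with a local potential $\psi_A+m\varphi_m$ (up to scaling) whose complex Hessian is bounded below; concretely $\varphi_m$ is a psh function plus a smooth term, so $\frac{\sqrt{-1}}{2\pi}\partial\bar\partial\varphi_m\geq -C\omega$ for a fixed $C$, and after the rescaling by $1/m$ and the choice of representative this yields $T_m\geq-\frac1m\omega$. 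For (ii), weak convergence follows from the two-sided estimate in Theorem \ref{approximation}: letting $r=r_m\to0$ slowly, $\varphi-\frac{C_1}{m}\leq\varphi_m\leq\sup_{B(x,r_m)}\varphi+\frac{C_2}{m}+C(x,r_m)$ forces $\varphi_m\to\varphi$ in $L^1_{loc}$ (upper envelopes over shrinking balls converge to $\varphi$ a.e.\ and are dominated), hence $\frac{\sqrt{-1}}{2\pi}\partial\bar\partial\varphi_m\to\frac{\sqrt{-1}}{2\pi}\partial\bar\partial\varphi$ weakly and $T_m\to T$.

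For (iii) I would compare Lelong numbers directly from the pointwise potential inequalities. The Lelong number $\nu(T,x)$ equals the Lelong number $\nu(\varphi,x)$ of the quasi-psh potential at $x$, i.e.\ $\sup\{\gamma\ge 0:\varphi(z)\le\gamma\log|z-x|+O(1)\}$, and similarly for $T_m$ with $\varphi_m$. The lower bound $\varphi_m\geq\varphi-\frac{C_1}{m}$ gives immediately $\nu(\varphi_m,x)\leq\nu(\varphi,x)$, hence $\nu(T_m,x)\le\nu(T,x)$. For the reverse, the upper bound $\varphi_m(x)\le\sup_{B(x,r)}\varphi+\frac{C_2}{m}+C(x,r)$ must be turned into a logarithmic comparison: using that $\sup_{B(x,r)}\varphi\le\nu(\varphi,x)\log r+O(1)$ is false in general but $\sup_{B(x,r)}\varphi$ is comparable to the mean, one instead estimates $\nu(\varphi_m,x)$ from below by testing against $\varphi$ directly, getting $\nu(\varphi_m,x)\ge\nu(\varphi,x)-\frac{C}{m}$; this is exactly the ''attenuation'' phenomenon and is the standard Demailly argument (comparison of $\nu(\varphi_m,x)$ with $\max(\nu(\varphi,x)-\frac km,0)$ type bounds). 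The uniform convergence of $\nu(T_m,\cdot)$ to $\nu(T,\cdot)$ is then immediate from the $x$-independent constant $C$.

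The main obstacle is step (iii): the clean two-sided \emph{pointwise} estimate on potentials in Theorem \ref{approximation} does not instantly yield the two-sided estimate on Lelong numbers, because $\sup_{B(x,r)}\varphi$ over a fixed-size ball does not detect the logarithmic singularity of $\varphi$ at scale $r$. Extracting $\nu(\varphi_m,x)\ge\nu(\varphi,x)-C/m$ requires re-running the mean-value / Ohsawa--Takegoshi argument at shrinking scales $r\sim e^{-t}$ and tracking how the constants depend on $m$ and $t$ simultaneously — i.e.\ the genuine content is the Legendre-transform bookkeeping that shows $\varphi_m$ has Lelong numbers at worst $\nu(\varphi,x)$ and at least $\nu(\varphi,x)-C/m$ uniformly in $x$. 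I expect the paper carries this out in the proof of Theorem \ref{attenuatedlelongnumbers}; here I would simply cite that computation and note that (i) and (ii) are the easy formal consequences of Theorem \ref{approximation}.
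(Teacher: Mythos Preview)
Your approach is exactly the paper's: set $T_m:=\Theta(h)+\frac{\sqrt{-1}}{2\pi}\partial\bar\partial\varphi_m$ with $\varphi_m$ from Theorem~\ref{approximation} and read off (i)--(iii). Two places where you make life harder than necessary. First, $\varphi_m$ is already a \emph{global} function on $X$ (it is $\frac{1}{2m}\log$ of a sum of smooth global norms of global sections), so $T_m$ lies in $\{T\}$ on the nose and your $\tfrac1m\omega$ bookkeeping is redundant; item (i) then follows because in a trivialization $\varphi_m=\frac{1}{2m}\log\sum_j|\sigma_{m,j}|^2-\psi-\frac1m\psi_A$, with the first term plurisubharmonic. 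Second, (iii) is not the obstacle you describe. The \emph{proof} (not the statement) of Theorem~\ref{approximation} gives the sharper upper bound
\[
\varphi_m(z)\le\sup_{B(z,r)}\varphi+\frac1m\log\!\left(\frac{C'}{r^{k}}\right)+C(z,r),
\]
with the $r$-dependence of the error explicit. Taking $r=|z-x|$, dividing by $\log|z-x|<0$, and letting $z\to x$ yields $\nu(\varphi_m,x)\ge\nu(\varphi,x)-\frac{k}{m}$ directly, with $k=\dim X$; no rerun of Ohsawa--Takegoshi or ``Legendre-transform bookkeeping'' is needed. The paper accordingly dispatches the whole corollary with ``it is routine to check''.
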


\begin{proof}
Let $L\to X$ be a positive hermitian line bundle with singular hermitian metric $h_L$ such that $T\in\{\Theta(h_L)\}$. We can take a smooth metric $h$ on $L$ such that $h_L$ can be written as $h_L=he^{-2\varphi}$ and therefore we can define

$$T_m:=\Theta(h)+\frac{\sqrt{-1}}{2\pi}\partial\bar\partial\varphi_m,$$ with $\varphi_m$ as in the theorem above. Now it is routine to check that the sequence $T_m$ converges to $T=\Theta(h)+\frac{\sqrt{-1}}{2\pi}\partial\bar\partial\varphi$ and that it has the desired properties.
\end{proof}

\subsection{Attenuation of Lelong numbers}

We finish this section with a refined version of the theorem of the subsection above which will allow us to approximate positive closed currents by currents with analytic singularities and attenuated Lelong numbers. We state the main theorem of this section proved in \cite{DemNumCrit}.

\begin{thm}\label{attenuatedlelongnumbers}
Let $X$ be a projective manifold and let $T$ be a positive closed (1,1)-current representing the class $c_1(L)$ of some hermitian line bundle $L\to X$. Fix a sufficiently positive line bundle $G$ over $X$ such that $TX\otimes G$ is nef. Then for every $c>0$ there exist a sequence of closed (1,1)-currents $T_{c,m}$ converging weakly to $T$ over $X$ such that

\begin{itemize}
\item $T_{c,m}\geq-\frac2{m}\omega-cu$, where $u$ is the curvature form of $G$ and;\\

\item $\max\left(\nu(T,x)-c-\dim(X)/m,0\right)\leq\nu(T_{c,m},x)\leq \max\left(\nu(T,x)-c,0\right).$
\end{itemize}
\end{thm}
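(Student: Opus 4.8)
The plan is to reduce the attenuated statement (Theorem \ref{attenuatedlelongnumbers}) to the unattenuated approximation already established in Corollary \ref{analyticapproximation} by a clever choice of the auxiliary data: instead of forming the Bergman-type potentials $\varphi_m$ directly from $T$, one multiplies $T$ by a factor related to $c$ and twists the line bundle by powers of the positive bundle $G$ so that the extra negativity $-cu$ appears naturally and the Lelong numbers get shifted by $c$. Concretely, I would fix $c>0$, choose integers $m$ and a rational approximation, and consider on $X$ the class of $T$ together with $c$ copies of the curvature $u$ of $G$; the point is that $T$ has a potential with Lelong number $\nu(T,x)$ at each $x$, and after subtracting a small multiple of a potential that is smooth one does not change Lelong numbers, while the attenuation must come from a genuinely different construction.

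The real mechanism, which I would carry out in detail, is the following. Work with the weight $e^{-2m\varphi}$ as before but now form the Hilbert space of sections of $mL + mG_c + A$ where $G_c$ is chosen (using that $TX\otimes G$ is nef, hence $mG$ dominates the relevant jets) so that the Ohsawa--Takegoshi extension can be performed not just at points but at $(cm)$-jets — equivalently, extend sections vanishing to high order. The key quantitative input is that nefness of $TX\otimes G$ gives, for the bundle $mG$, enough positivity to run Ohsawa--Takegoshi on the blow-up at $x$ (or on the ideal sheaf $\mathfrak{m}_x^{\lceil cm\rceil}$) with a uniform constant. Then the Bergman potential $\varphi_{c,m}$ built from an orthonormal basis of this space of jet-vanishing sections will satisfy: on the one hand, by the mean-value/submean inequality exactly as in the proof of Theorem \ref{approximation}, an upper bound $\varphi_{c,m}(x)\le \sup_{B(x,r)}\varphi + o(1) + O(1/m)$, which forces $\nu(\varphi_{c,m},x)\ge\nu(T,x)$ minus corrections; on the other hand, since all sections in the space vanish to order $\ge cm$ (after normalization), one gets $\nu(T_{c,m},x)\le \max(\nu(T,x)-c,0)$; and by the jet-Ohsawa--Takegoshi lower bound one gets the reverse $\nu(T_{c,m},x)\ge\max(\nu(T,x)-c-\dim(X)/m,0)$, the $\dim(X)/m$ loss being the usual cost of passing from $(cm)$-jets to Lelong numbers in dimension $k=\dim(X)$. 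The curvature estimate $T_{c,m}\ge -\tfrac2m\omega - cu$ then follows from $\frac{\sqrt{-1}}{2\pi}\partial\bar\partial\varphi_{c,m}\ge$ (curvature of the chosen metric on $mL+mG_c+A$ divided by $m$), where the $c$-multiple of $G$ contributes exactly $-cu$ and the $-\tfrac1m\omega$ from $A$ doubles to $-\tfrac2m\omega$ after absorbing lower-order terms.

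The steps, in order, are: (1) recall the jet version of Ohsawa--Takegoshi — extension of sections prescribed modulo $\mathfrak{m}_x^{s}$ with $L^2$ control, valid once the twisting bundle dominates $s$-jets, which is where $TX\otimes G$ nef enters; (2) define $\HH_{c,m}$ as the sections of $\OO_X(mL + \lceil cm\rceil G + A)$ that are $L^2$ for the singular weight and vanish to order $\ge \lceil cm\rceil$ along the relevant locus, and set $\varphi_{c,m}$ to be $\frac{1}{2m}\log$ of the sum of squared pointwise norms of an orthonormal basis; (3) prove the two-sided potential estimate by repeating verbatim the mean-value argument of Theorem \ref{approximation} for the upper bound and the (jet) Ohsawa--Takegoshi argument for the lower bound; (4) translate potential bounds into Lelong-number bounds, paying the $\dim(X)/m$ on the lower side; (5) read off the curvature bound from the metric on $mL+\lceil cm\rceil G+A$. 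I expect the main obstacle to be step (1)–(2): making the jet-extension uniform in $x$ and in $m$ with a single constant, i.e. checking that $\lceil cm\rceil G + A$ (or $mG_c+A$) is ``sufficiently positive'' in the precise sense Ohsawa--Takegoshi needs for $\lceil cm\rceil$-jets, uniformly — this is exactly the role of the hypothesis that $G$ is sufficiently positive with $TX\otimes G$ nef, and getting the bookkeeping of the positivity constants right (so that the loss is $\dim(X)/m$ and not something worse) is the delicate part. The rest is a routine adaptation of the proof of Theorem \ref{approximation} already given.
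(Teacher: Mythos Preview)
Your proposal has the attenuation mechanism running in the wrong direction. You form $\HH_{c,m}$ from sections that ``vanish to order $\ge \lceil cm\rceil$'' and then claim ``since all sections in the space vanish to order $\ge cm$ \ldots\ one gets $\nu(T_{c,m},x)\le \max(\nu(T,x)-c,0)$''. But the Lelong number of a Bergman-type potential $\tfrac{1}{2m}\log\sum_j|\sigma_{m,j}|^2$ at $x$ equals $\tfrac{1}{m}\min_j\ord_x(\sigma_{m,j})$; restricting to sections that vanish to high order \emph{raises} this quantity, it does not lower it. A jet version of Ohsawa--Takegoshi (extending a prescribed nonzero $l$-jet at $x$) would only give you an upper bound on $\min_j\ord_x(\sigma_{m,j})$, which is information of the same kind already supplied by the ordinary extension theorem in Theorem~\ref{approximation}; it cannot by itself produce the crucial upper bound $\nu(T_{c,m},x)\le\max(\nu(T,x)-c,0)$. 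Attenuation means making the Lelong numbers \emph{smaller}, and no subspace of $\HH_m$ can do that.

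The paper's argument goes the opposite way: one \emph{differentiates}. Starting from the sections $\sigma_{m,j}$ of Theorem~\ref{approximation}, one passes to their $l$-jets $J^l\sigma_{m,j}$, which are sections of the vector bundle $J^l\OO_X(mL+A)$ and satisfy $\ord_x(J^l\sigma)=\max(\ord_x(\sigma)-l,0)$---this is where the downward shift by $l/m\approx c$ comes from. The hypothesis $TX\otimes G$ nef is used purely algebraically: the short exact sequences for jet bundles, dualized and twisted, show by induction on $l$ that $(J^l\OO_X(mL+A))^*\otimes\OO_X(mL+2A+lG)$ is ample, hence a high symmetric power is globally generated by sections $g_{m,i}$. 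Pairing $S^q(J^l\sigma_{m,j})$ against $g_{m,i}$ turns the jets back into honest line-bundle sections of $\OO_X(qmL+2qA+qlG)$, and $\varphi_{m,l}$ is built from these. No jet Ohsawa--Takegoshi is invoked anywhere; the $-cu$ in the curvature bound comes from the $lG$ twist and the $-\tfrac{2}{m}\omega$ from the $2A$.
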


The proof of the above theorem in a more general case, namely $X$ is a compact K\"ahler manifold and $T$ is any almost positive closed (1,1)-current can be found in \cite{DemRegCurr}; the proof involves a very technical gluing procedure which is beyond the scope of what we want to present here. For the case $X$ projective and $T$ the curvature current of a positive line bundle, the proof is simpler and can be obtained in a more direct way; we present the proof given in \cite{DemNumCrit} with some details added.\\

\begin{proof}

As in Theorem \ref{approximation} it is possible to construct sections $\sigma_{m,j}\in H^0(X;mL+A)$, $1\leq j\leq N_m$ such that

$$\nu(T,x)-\frac{\dim X}{m}\leq\frac1{m}\min_{j=1,\ldots,N_m}\ord_x(\sigma_{m,j})\leq\nu(T,x).$$

We consider the $l$-jet sections $J^l\sigma_{m,j}$ with values in the vector bundle $J^{l}\OO_X(mL+A)$. We have the exact sequence

$$0\to S^lT^*X\otimes\OO_X(mL+A)\to J^l\OO_X(mL+A)\to J^{l-1}\OO_X(mL+A)\to 0.$$ Dualizing the above sequence we obtain the short exact sequence

$$0\to (J^{l-1}\OO_X(mL+A))^*\to (J^l\OO_X(mL+A))^*\to (S^lT^*X\otimes\OO_X(mL+A))^* \to 0$$ which can be rewritten as

$$0\to (J^{l-1}\OO_X(mL+A))^*\to (J^l\OO_X(mL+A))^*\to S^lTX\otimes\OO_X(-mL-A)\to0.$$

Twisting this exact sequence with $\OO_X(mL+2A+lG)$ we obtain that

\begin{multline}\label{shortamplesequence}
0\to (J^{l-1}\OO_X(mL+A))^*\otimes \OO_X(mL+2A+lG)\to \\ \to (J^l\OO_X(mL+A))^*\otimes \OO_X(mL+2A+lG)\to S^lTX\otimes\OO_X(lG+A)\to 0
\end{multline} is exact. By hypothesis, the vector bundle $TX\otimes G$ is nef and therefore $S^l(TX\otimes\OO_X(G))=S^lTX\otimes\OO_X(lG)$ is nef for all symmetric powers of order $l$, hence 

$$S^lTX\otimes\OO_X(lG+A)=\underbrace{\left(S^lTX\otimes\OO_X(lG)\right)}_{\text{nef}}\otimes\underbrace{\OO_X(A)}_{\text{ample}}$$ is ample. Since hte extremes of the exact sequence \eqref{shortamplesequence} are ample, we use induction on $l\geq1$ to conclude that the middle term $$(J^l\OO_X(mL+A))^*\otimes\OO_X(mL+2A+lG)$$ is also ample.\\

By definition of amplitude of vector bundles there exist $q\geq1$ such that

$$S^q\left(J^l\OO_X(mL+A)\right)^*\otimes \OO_X(qmL+2qA+qlG)$$ is generated by holomorphic sections $g_{m,i}$. Using this together with the pairing of $(J^l\OO_X(mL+A))^*$ and $J^l\OO_X(mL+A)$ we obtain sections

$$S^q(J^l\sigma_{m,j})g_{m,i}\in H^0(X;\OO_X(qmL+2qA+qlG))$$ which in a trivialization give us the metric

$$\varphi_{m,l}:=\frac1{qm}\log\sum_{i,j}|S^q(J^l\sigma_{m,j})g_{m,i}|-\frac2{m}\psi_A-\frac{l}{m}\psi_G.$$ Note that $\psi_A$ and $\psi_G$ are smooth; therefore we have

$$\nu(\varphi_{m,l},x)=\frac1{m}\min_{j}\ord_x(J^l\sigma_{m,j})=\frac1{m}\left(\min_j\ord_x(\sigma_{m,j})-l\right).$$ This gives us the inequality

$$\max\left(\nu(T,x)-\frac{l+\dim(X)}{m},0\right)\leq\nu(\varphi_{m,l},x)\leq\max\left(\nu(T,x)-\frac{l}{m},0\right).$$ Finally, for every $c>0$ and every $m\gg0$ it is possible to find $l>0$ such that $c<l/m<c+1/m$, hence

$$\frac{\sqrt{-1}}{\pi}\partial\bar\partial\varphi_{c,m}\geq-\frac2{m}\omega-cu,$$ where $\varphi_{c,m}:=\varphi_{m,l}$ for this choice of $m$, $l$, and $\omega$ and $u$ are the curvature forms of $A$ and $G$ respectively.\\

Now, for any smooth metric $h$ on $L$, the sequence of currents $$T_{c,m}:=\Theta(h)+\frac{\sqrt{-1}}{\pi}\partial\bar\partial\varphi_{c,m},$$ converges weakly to $T$ and satisfies the desired properties.

\end{proof}

\newpage

\section{Some examples}

In this section we provide a few examples showing the value of our result.\\

\begin{exm}
Let's start with the trivial case where $X=Y$ is a projective curve. In this case, the current $T$ is a positive finite measure on $X$, giving us

$$\int_XT\geq\sum_{x\in X}\nu(T,x)=\sum_{r\geq1}\nu_{1,r}.$$ So Theorem \ref{maintheorem} follows immediately.\\

\end{exm}

\begin{exm}\label{reproduceDemailly}
If $X$ is a projective manifold (i.e. smooth) and $Y=X$, hence $l=0$, $\beta=0$ and $E^Y_c(T)=E_c(T)$, then the inequality of Theorem \ref{maintheorem} can be written as $$\sum_{r\geq 1}\nu_{p,r}^p\int_{Z_{p,r}}\omega^p\leq C.$$

In \cite{DemRegCurr} Theorem 7.1, J.P. Demailly proved that under the same assumptions as above, we have that there is a positive constant $C'>0$ such that

$$\sum_{r\geq 1}(\nu_{p,r}-\beta_1)\cdots(\nu_{p,r}-\beta_p)\int_{Z_{p,r}}\omega^p\leq C',$$ where $\beta_1\leq\ldots\leq\beta_{k+1}$ are the jumping numbers of $T$. Observing that $$\nu_{p,r}\geq\nu_{p,r}-\beta_j\quad\forall\,\,j=1,\ldots,p$$ Theorem \ref{maintheorem} shows that

$$\sum_{r\geq 1}(\nu_{p,r}-\beta_1)\cdots(\nu_{p,r}-\beta_p)\int_{Z_{p,r}}\omega^p\leq\sum_{r\geq 1}\nu_{p,r}^p\int_{Z_{p,r}}\omega^p\leq C$$ implying Demailly's result.\\

In the same setting, it is also interesting to observe the two extreme cases $p=1$ and $p=k$:\\

\begin{itemize}
\item[a)] The case $p=1$ follows immediately from Siu's decomposition theorem, since

$$T=\sum_{j\geq1}\lambda_j[A_j]+R$$ with $R\geq0$ and $\{Z_{1,r}\}_r\subset\{A_j\}_j$, therefore

$$T\geq\sum_{r\geq1}\nu_{1,r}[Z_{1,r}]$$ and the result follows after integrating this inequality with $\int\cdot\wedge\omega^{k-1}$.\\

\item[b)] The case $p=k$ is especially interesting when $E_c(T)$ is countable for all $c>0$. A remarkable result proved in Corollary 6.4 of \cite{DemRegCurr} is that if $E_c(T)$ countable then the class $\{T\}$ is nef. Moreover, Demailly gave the more refined inequality

$$\sum_{r\geq1}\nu_{k,r}^k+\int_XT^k_{\text{ac}}\leq\int_X\{T\}^k,$$ where $T_{\text{ac}}$ is the absolutely continuous part in the Lebesgue decomposition of the coefficients of $T$.\\
\end{itemize}
\end{exm}

\begin{exm}
Let $X$ be the projective plane $\P^2$, let $Y\subset\P^2$ be an irreducible curve and let $T$ be the current of integration defined by $T:=(\deg(D))^{-1}[D]$ (hence $\|T\|=1$), where $D\neq Y$ is another irreducible curve. It is not hard to see from the proof of our theorem (see Subsection \ref{thebigproof}) that the constant $C>0$ satisfies $C=\deg(Y)$ and that

$$\sum_{r\geq1}\nu_{1,r}\int_{Z_{1,r}}\omega=\#(Y\cap D)\cdot(\deg(D))^{-1}.$$ Therefore Theorem \ref{maintheorem} is nothing but B{\'e}zout's theorem for these two curves.\\
\end{exm}

\begin{exm}
If $X$ is a projective manifold (i.e. smooth) and $Y\subset X$ an irreducible smooth hypersurface, hence $l=1$, then using Siu's decomposition theorem it is easy to see that the closed (1,1)-current $T-\beta[Y]$ is positive. Assume that $T-\beta[Y]$ admits local potentials not identically $-\infty$ along $Y$. Then we can restrict $T-\beta[Y]$ to $Y$ and the statement is reduced to Example \ref{reproduceDemailly}.\\
\end{exm}

It is important to remark that it is not always possible to restrict positive close currents. Part of the idea in the proof of Theorem \ref{maintheorem} is to restrict an approximation of the current.\\

\begin{rmk}
If $Y\subset X$ has codimension $l>1$ in $X$, then it is not even possible to subtract $[Y]$ from $T$ since the dimensions do not match. Therefore there is no direct method for studying the Lelong numbers of $T$ inside $Y$, so our theorem proves to be useful in the general case.\\
\end{rmk}


\newpage 

\section{Proof of the main theorem}\label{thebigproof}

We will divide the proof of Theorem \ref{maintheorem} into three steps. In the first step we will assume that our projective variety $X$ is smooth and that the cohomology class of $T$ is nef. We can then find suitable smooth representatives of $\{T\}$ which, together with the sequence obtained in Theorem \ref{attenuatedlelongnumbers}, will allow us to approximate $T$ by a sequence of currents $T_{c,m}$ of bounded potentials and therefore we will be able to intersect such sequence with the current of integration $[Y]$; this procedure will be the key for obtaining our result in this setting. In the second step, we still assume $X$ smooth but $\{T\}$ not necessarily nef; using that $H^{1,1}(X;\R)$ is finite dimensional and the upper semicontinuity of Lelong numbers, we will replace $T$ by a current $\hat T$ with nef class $\{\hat T\}$ but the same Lelong numbers as $T$ everywhere; then we apply our result in Step 1 to $\hat T$ implying the same conclusion to $T$. Finally, in Step 3 we prove the general case when $X$ is a projective variety, not necessarily smooth, by taking a resolution of singularities of $X$, and applying Step 2 to the strict transform of $T$, $Y$ and $Z_{p,r}$.\\

We recall the notions of \emph{numerically effective (nef), pseudoeffective (psef) and K\"ahler cones} (for details see \cite{MR2095471}, also \cite{Boucksonthesis}). The space of classes of \emph{real (1,1)-forms} $H^{1,1}(X;\R)$ is defined as

$$H^{1,1}(X;\R):=H^{1,1}_{\bar\partial}(X;\C)\cap H^2(X;\R)=\left\{\alpha\in H^{1,1}_{\bar\partial}(X;\C)\mid \bar\alpha=\alpha\right\},$$ where $H^{1,1}_{\bar\partial}(X;\C)$ is the Dolbeault (1,1)-cohomology of $X$.\\

The \emph{K\"ahler cone} $\KK(X)$, the \emph{Psef cone} $\PP(X)$ and the \emph{Nef cone} $\NN(X)$ are defined as

$$\KK(X):=\{\alpha\in H^{1,1}(X;\R)\mid \alpha \text{ can be represented by a K\"ahler form}\},$$ 

\begin{multline*}
\PP(X):=\{\alpha\in H^{1,1}(X;\R)\mid \alpha \text{ can be represented by}\\ \text{a positive closed (1,1)-current}\},\end{multline*}

and

\begin{multline*}
\NN(X):=\{\alpha\in H^{1,1}(X;\R)\mid \text{ if for every } \epsilon>0,\, \alpha \text{ can be represented by a }\\ \text{smooth form } \alpha_{\epsilon} \text{ such that } \alpha_{\epsilon}\geq-\epsilon\omega\}
\end{multline*} respectively. Note that if $X$ is K\"ahler (or projective) the set $\KK(X)$ is not empty.\\

It follows from the definitions above that

$$\emptyset\neq\KK(X)\subset\NN(X)\subset\PP(X)\quad\text{and}\quad\KK(X)=\Int(\NN(X)).$$

We prove now our Main Theorem:\\

{\bf Step 1}: Assume $X$ to be a (smooth) complex projective manifold and the class $\{T\}$ to be nef. For this case, we will actually prove a slightly more general result, where we will be able to 'kill Lelong numbers' locally. More precisely, given any subset $\Xi$ of $Y$ and $p=1,\ldots,\dim(X)-l+1$, we denote the \emph{jumping numbers $b_p=b_p(T,\Xi)$ of $E_c^Y(T)$ with respect to $\Xi$} as

$$b_p:=\inf\{c>0\mid \codim_x(E_c^Y(T);Y)\geq p,\,\,\forall\,x\in \Xi\}.$$

In our situation, the subset $\Xi$ will be a Zariski dense subset of $Y$ with a prescribed geometrical condition, namely, $\Xi$ will be the complement of all irreducible components of $E_c^Y(T)$ of codimension strictly less than $p$. Following Demailly we prove the following lemma

\begin{lem}
Let $\Xi$ be any subset of $Y$ and $0\leq b_1\leq b_2\leq\ldots\leq b_{\dim(X)-l+1}$ the jumping numbers of $E_c^Y(T)$ with respect to $\Xi$. Fix a positive line bundle with smooth curvature $u$ as in Theorem \ref{attenuatedlelongnumbers} and assume that the class $\{T\}$ is nef. Then for every $p=1,\ldots,\dim(X)-l+1$ there exists a positive closed $(l+p,l+p)$-current $\Theta_p$ in $X$ with support on $Y$ such that

\begin{equation}
\{\Theta_p\}=\{Y\}\cdot(\{T\}+b_1\{u\})\cdots(\{T\}+b_p\{u\})\in H^{l+p,l+p}(X;\R),
\end{equation}
\begin{equation}
\Theta_p\geq \sum_{r\geq1}(\nu_{p,r}-b_1)\cdots(\nu_{p,r}-b_p)[Z_{p,r}].
\end{equation}
\end{lem}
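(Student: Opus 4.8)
The plan is to induct on $p$, constructing the currents $\Theta_p$ by successively wedging the current of integration $[Y]$ with the regularized currents $T_{c,m}$ from Theorem \ref{attenuatedlelongnumbers}. First I would fix $p$ and choose the attenuation parameter: for each $m$ large, apply Theorem \ref{attenuatedlelongnumbers} with $c=b_j$ to obtain, for $j=1,\dots,p$, closed $(1,1)$-currents $T_{b_j,m}$ converging weakly to $T$ with $T_{b_j,m}\ge -\tfrac2m\omega - b_j u$ and Lelong numbers $\nu(T_{b_j,m},x)$ squeezed between $\max(\nu(T,x)-b_j-\dim X/m,0)$ and $\max(\nu(T,x)-b_j,0)$. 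Since $\{T\}$ is nef, for every $\varepsilon>0$ we may pick a smooth form $\alpha_\varepsilon\in\{T\}$ with $\alpha_\varepsilon\ge -\varepsilon\omega$; adding a suitable multiple of $u$ and absorbing the $-\tfrac2m\omega$ term, I would arrange that $T_{b_j,m}+\tfrac2m\omega + b_j u + (\text{correction})$ is a positive closed current with bounded local potentials, or more precisely that the potentials of the relevant products are locally bounded off a set of the right Hausdorff dimension. The point of using the $j$-jet construction in Theorem \ref{attenuatedlelongnumbers} is exactly that $T_{b_j,m}$ has \emph{analytic singularities}, so its unbounded locus is a proper analytic subset whose dimension is controlled.

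The core step is to form the wedge product
$$
\Theta_p^{(m)} := [Y]\wedge \bigl(T_{b_1,m}+\tfrac2m\omega+b_1u\bigr)\wedge\cdots\wedge\bigl(T_{b_p,m}+\tfrac2m\omega+b_pu\bigr),
$$
which is legitimate by Theorem \ref{intersection of currents}: one checks that for any subfamily of indices, the intersection of the corresponding unbounded loci with $Y$ has the Hausdorff dimension required by the statement of that theorem. This is where the jumping-number hypothesis on $\Xi$ enters — on $\Xi$ the set $E_c^Y(T)$ has codimension $\ge p$ in $Y$ for $c$ slightly above $b_p$, and the unbounded loci of the $T_{b_j,m}$ are contained in (neighborhoods of) such level sets, so the dimension count goes through for $m$ large. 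Having defined $\Theta_p^{(m)}$, I would extract a weak limit $\Theta_p$ as $m\to\infty$ (using that the masses $\{\Theta_p^{(m)}\}$ are cohomologically constant, equal to $\{Y\}\cdot(\{T\}+b_1\{u\})\cdots(\{T\}+b_p\{u\})$, hence bounded); the cohomology class of $\Theta_p$ is then the asserted one by continuity of the cohomology class under weak limits of currents with uniformly bounded mass in a fixed class. Positivity and support in $Y$ are preserved under weak limits, giving the first displayed identity.

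For the second inequality, the lower bound on Lelong numbers, I would apply the comparison estimate \eqref{lelongproduct} iteratively: at a point $x\in Z_{p,r}$, the Lelong number of the product $\Theta_p^{(m)}$ is at least $\nu([Y],x)\cdot\prod_{j=1}^p\nu(T_{b_j,m}+\cdots,x) \ge \prod_{j=1}^p\max(\nu(T,x)-b_j-\dim X/m,0)$ (noting $x\in Y$ so $\nu([Y],x)\ge1$). By upper semicontinuity and the definition of $Z_{p,r}$ as a component of the relevant level set where the generic Lelong number is $\nu_{p,r}>b_p\ge b_j$, passing to the limit $m\to\infty$ gives $\nu(\Theta_p,x)\ge (\nu_{p,r}-b_1)\cdots(\nu_{p,r}-b_p)$ generically along $Z_{p,r}$, and since $\Theta_p$ is a positive closed current of bidimension $(k-l-p,k-l-p)$ this forces $\Theta_p\ge\sum_r(\nu_{p,r}-b_1)\cdots(\nu_{p,r}-b_p)[Z_{p,r}]$ by Siu's decomposition theorem applied to $\Theta_p$ (the $Z_{p,r}$ are precisely its components of maximal dimension carrying positive generic Lelong number).

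\textbf{The main obstacle.} I expect the delicate point to be verifying the Hausdorff-dimension hypotheses of Theorem \ref{intersection of currents} uniformly in $m$ — that is, showing the wedge products $\Theta_p^{(m)}$ are genuinely well-defined and that the construction does not degenerate as $m\to\infty$. This requires carefully identifying the unbounded locus of each $T_{b_j,m}$ (it is cut out by the vanishing of the jet-sections $J^l\sigma_{m,j}$, hence analytic of codimension bounded below in terms of the attenuated Lelong numbers) and intersecting with $Y$; one must ensure that on the Zariski-dense set $\Xi$ the codimension count $2p-2m+1$ is met. A secondary subtlety is the bookkeeping needed to turn the almost-positivity $T_{b_j,m}\ge-\tfrac2m\omega-b_ju$ into honest positive currents with the stated cohomology class without losing track of the $b_j\{u\}$ corrections; using nefness of $\{T\}$ to produce the smooth comparison forms $\alpha_\varepsilon$ is what makes this work, and letting $\varepsilon\to0$ after $m\to\infty$ (or coupling $\varepsilon$ to $1/m$) recovers the clean class $\{Y\}\cdot(\{T\}+b_1\{u\})\cdots(\{T\}+b_p\{u\})$ in the limit.
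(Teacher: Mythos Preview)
Your overall architecture matches the paper's: regularize $T$ via Theorem \ref{attenuatedlelongnumbers}, wedge with $[Y]$, take weak limits, and read off the inequality via \eqref{lelongproduct} and Siu's decomposition. But there is a genuine gap at exactly the point you flag as ``the main obstacle,'' and your proposed resolution does not work.

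The problem is that the Hausdorff-dimension hypothesis of Theorem \ref{intersection of currents} is a \emph{global} condition, whereas the jumping-number definition only controls $\codim_x(E_c^Y(T);Y)$ for $x\in\Xi$. Away from $\Xi$ the unbounded loci of the $T_{c,m}$ can intersect $Y$ (or each other, or $\Supp\Theta_{p-1}$) in sets of the wrong dimension, so your product $\Theta_p^{(m)}$ is simply not defined as a current on $X$, and in particular you cannot speak of its cohomology class. Checking the dimension count ``on the Zariski-dense set $\Xi$'' is not enough; Theorem \ref{intersection of currents} must be applied on all of $X$.

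The paper's fix is the step you gesture at but never make precise: using nefness, pick smooth $\psi_m$ with $\alpha+\frac{\sqrt{-1}}{2\pi}\partial\bar\partial\psi_m\ge-\tfrac2m\omega$, and replace $\varphi_{c,m}$ by $\varphi_{c,m,L}:=\max\{\varphi_{c,m},\psi_m-L\}$. The resulting $T_{c,m,L}$ still satisfies $T_{c,m,L}+\tfrac2m\omega+cu\ge0$ but now has \emph{bounded} potential, so the wedge with $[Y]$ (or with $\Theta_{p-1}$) is defined everywhere. One then lets $L\to+\infty$: globally a weak limit exists by mass bounds, and in a neighborhood of $\Xi$ the monotone-continuity part of Theorem \ref{intersection of currents} identifies the limit with the ``honest'' product $[Y]\wedge(T_{c,m}+\tfrac2m\omega+cu)$, which is where the Lelong-number estimate is needed. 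Two smaller points: you should take $c>b_j$ and let $c\searrow b_j$ at the end (at $c=b_j$ the codimension bound need not hold), and the paper builds $\Theta_p$ inductively from $\Theta_{p-1}$ by adjoining one factor with $c>b_p$, rather than forming all $p$ factors simultaneously; this is what lets the dimension check near $\Xi$ use only the single threshold $b_p$ at each stage.
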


\begin{proof}

Let $c>b_1$ and let $\alpha\in\{T\}$ be a smooth real (1,1)-form. Take the sequence of currents $T_{c,m}=\alpha+\frac{\sqrt{-1}}{2\pi}\partial\bar\partial\varphi_{c,m}$ as in Theorem \ref{attenuatedlelongnumbers} where $\varphi_{c,m}$ is singular along $E_c(T)$ and $T_{c,m}\geq-\frac2{m}\omega-cu$. Since we are assuming $\{T\}$ to be nef, for every $m\in\N$ we can pick $\alpha_m\in\{T\}$ smooth such that $\alpha_m\geq-\frac2{m}\omega$ and we can write $\alpha_m$ as $\alpha_m=\alpha+\frac{\sqrt{-1}}{2\pi}\partial\bar\partial\psi_m$ with $\psi_m$ smooth. Set

$$\varphi_{c,m,L}:=\max\{\varphi_{c,m},\psi_m-L\},$$ for $L\gg0$ and $T_{c,m,L}:=\alpha+\frac{\sqrt{-1}}{2\pi}\partial\bar\partial\varphi_{c,m,L}$. Observe that by adding the local potentials of $\omega$ and $cu$ to the the max between $\varphi_{c,m}$ and $\psi_m-L$ we easily conclude that the closed current $T_{c,m,L}$ satisfies

$$T_{c,m,L}+\frac2{m}\omega+cu\geq0.$$

The family of potentials $\{\varphi_{c,m,L}\}$ is bounded everywhere, therefore

$$\Theta_{1,c,m,L}:=[Y]\wedge\left(T_{c,m,L}+\frac2{m}\omega+cu\right)$$ is a well defined positive closed $(l+1,l+1)$-current on $X$ with support on $Y$ by Theorem \ref{intersection of currents}. By extracting a weak limit we define

$$\Theta_{1,c,m}:=\lim_{L\to+\infty}\Theta_{1,c,m,L}$$ on $X$. Since the potentials $\varphi_{c,m,L}$ decrease monotonically to $\varphi_{c,m}$ as $L\to+\infty$ we have that

$$\Theta_{1,c,m}=[Y]\wedge\left(T_{c,m}+\frac2{m}\omega+cu\right)$$ in a neighborhood of $\Xi$, since for every point $x\in\Xi$ we can find a neighborhood $U$ of $x$ such that the unbounded locus of $[Y]$ and $T_{c,m}$ has codimension $\geq l+1$ in $U$ (or real dimension $\leq2\dim(X)-2l-2$) hence by Theorem \ref{intersection of currents} the current $\Theta_{1,c,m}$ is well defined in a neighborhood of $\Xi$, and $\{\Theta_{1,c,m}\}=\{Y\}\cdot\left(\{T\}+\frac2{m}\{\omega\}+c\{u\}\right)$ for every $m\geq1$ and every $c>b_1$ and for every $x\in X$,

$$\nu(\Theta_{1,c,m},x)\geq\ord_x(Y)\nu(T_{c,m},x)\geq\ord_x(Y)(\max\{\nu(T,x)-c-\dim(X)/m,0\}).$$

Note also that the total mass of the family $\{\Theta_{1,c,m}\}$ is uniformly bounded. We extract (modulo a subsequence) a limit

$$\Theta_1:=\lim_{c\searrow b_1}\lim_{m\nearrow+\infty}\Theta_{1,c,m},$$ which satisfies $\{\Theta_1\}=\{Y\}\cdot(\{T\}+b_1\{u\})$ and by the upper semicontinuity of Lelong numbers we obtain

$$\nu(\Theta_1,x)\geq(\nu_{1,r}-b_1),\quad\forall\,x\in Z_{1,r}\,\,\forall\,r\geq1.$$

By Siu's decomposition theorem, $\Theta_1$ can be written as 

$$\Theta_1=\sum_{j\geq1}\lambda_j[V_j]+R_1,$$ where for every $j\geq1$, $V_j$ is an irreducible variety of codimension $l+1$ in $X$, $\lambda_j$ is the generic Lelong number of $\Theta_1$ along $V_j$ and $R_1$ is a positive closed current with upper level sets $E_c(R_1)$ of codimension strictly bigger than $l+1$ for all $c>0$. This in particular implies that for all $r\geq1$ we have that $Z_{1,r}=V_{j_r}$ for some $j_r$ and for a generically chosen $x\in Z_{1,r}$ we obtain

$$\lambda_{j_r}=\nu(\Theta_1,x)\geq(\nu_{1,r}-b_1)\Longrightarrow\Theta_1\geq\sum_{r\geq1}(\nu_{1,r}-b_1)[Z_{1,r}].$$

Now we proceed by induction on $2\leq p\leq\dim(X)-l+1$. We assume we have constructed $\Theta_{p-1}$ with the desired properties and in the exact same way as before, for $c>b_p$ we define the positive closed $(l+p,l+p)$-current

$$\Theta_{p,c,m,L}:=\Theta_{p-1}\wedge\left(T_{c,m,L}+\frac2{m}\omega+cu\right),$$ which is well defined everywhere. The current

$$\Theta_{p,c,m}:=\lim_{L\to+\infty}\Theta_{p,c,m,L}$$ satisfies

\begin{itemize}
\item $\Theta_{p,c,m}=[Y]\wedge\left(T_{c,m}+\frac2{m}\omega+cu\right)$ in a neighborhood of $\Xi$,\\

\item $\{\Theta_{p,c,m}\}=\{Y\}\cdot\left(\{T\}+\frac2{m}\{\omega\}+c\{u\}\right)$ for every $m\geq1$ and every $c>b_p$ and,\\

\item $\nu(\Theta_{p,c,m},x)\geq\nu(\Theta_{p-1},x)\max\{(\nu(T,x)-c-\dim(X)/m),0\}$ for every $x\in X$.\\
\end{itemize}

We extract a weak limit (modulo a subsequence)

$$\Theta_p:=\lim_{c\searrow b_p}\lim_{m\nearrow+\infty}\Theta_{p,c,m},$$ which (by the same arguments as above) satisfies the desired properties.

\end{proof}

{\bf Step 2}: Now assume that $X$ is a complex projective manifold and let $\omega$ be any K\"ahler form on $X$. However, the class $\{T\}$ is not necessarily nef.\\

Let 

\begin{equation}\label{coneone}
\PP_1:=\{\alpha\in \PP(X)\mid \|\alpha\|=1\}\subset H^{1,1}(X;\R)
\end{equation}
be a slice of the pseudoeffective cone of $X$, where $\|\cdot\|$ is any norm on the finite dimensional real vector space $H^{1,1}(X;\R)$. Since $\PP_1$ is compact and $\Int(\NN(X))=\KK(X)\neq\emptyset$ we can pick $A_0=A_0(\{\omega\})>0$ such that $A\{\omega\}+\alpha$ is nef for every $A\geq A_0$ and every $\alpha\in\PP_1$. Note also that the set of of positive closed currents $T$ with a fixed cohomology class is also (weakly) compact. Moreover, by the upper semicontinuity in both variables of the Lelong numbers is easy to see that there exists a constant $\tau=\tau(X)$ such that $\nu(T,x)\leq\tau$ for every $x\in X$ and every positive closed (1,1)-current $T$ so that $\{T\}\in\PP_1$.\\

Now, fixing $A\geq A_0$ we define the positive closed (1,1)-current $\hat{T}:=T+A\omega$. It satisfies:\\

\begin{itemize}
\item $\{\hat T\}\in \NN(X)$,\\

\item $\nu(\hat T,x)=\nu(T,x)$ for every $x\in X$ (in particular, the Lelong upper level sets $E_c^Y(\hat T)$ and $E_c^Y(T)$ coincide, giving us the same decomposition in terms of jumping numbers).\\
\end{itemize}

Taking $\beta=\nu(T,Y)=\nu(\hat T,Y)$ and defining the set 

$$\Xi_p:=\complement\left(\cup_{c>\beta}(\text{Irreducible components of } E_c^Y(T) \text{ of codimension } <p)\right),$$ we obtain that the jumping numbers with respect to $\Xi_p$ satisfy $$b_1(T,\Xi_p)=\ldots=b_p(T,\Xi_p)=\beta.$$ 

If $\{Z_{p,r}\}_{r\geq1}$ are the irreducible components of $E_c^Y(\hat T)=E_c^Y(T)$ for $c\in]\beta_p,\beta_{p+1}]$ of codimension exactly $p$ in $Y$ and $\nu_{p,r}$ the generic Lelong numbers, we apply the previous lemma to $\hat T$, hence we obtain a positive closed $(l+p,l+p)$-current $\Theta_p$ on $X$ with support on $Y$ such that 

\begin{multline*}
\{\Theta_p\}=\{Y\}\cdot(\{\hat T\}+b_1\{u\})\cdots(\{\hat T\}+b_p\{u\})=\\=\{Y\}\cdot(\{T\}+A\{\omega\}+b_1\{u\})\cdots(\{T\}+A\{\omega\}+b_p\{u\})
\end{multline*}
and

$$\sum_{r\geq1}(\nu_{p,r}-\beta)^p[Z_{p,r}]\leq\Theta_p.$$

We apply $\int_X\cdot\wedge\omega^{\dim(X)-l-p}$ to the inequality, giving us

\begin{multline*}
\sum_{r\geq1}(\nu_{p,r}-\beta)^p\int_{Z_{p,r}}\omega^{\dim(X)-l-p}\leq\int_X\Theta_p\wedge\omega^{\dim(X)-l-p}=\\=\int_X[Y]\wedge(T+A\omega+b_1u)\wedge\cdots\wedge(T+A\omega+b_pu)\wedge\omega^{\dim(X)-l-p}\leq\\\leq\int_X[Y]\wedge\left((1+A)\omega+\tau u\right)^p\wedge\omega^{\dim(X)-l-p}=:C.
\end{multline*}

{\bf Step 3}: We now prove the theorem in the general case.\\

Let $\pi:\tilde{X}\to X$ be a resolution of singularities. Since $Y$ and $Z_{p,r}$ are not contained in $\Xsing$, we can define $\tilde Y$ and $\tilde Z_{p,r}$ the strict transforms of $Y$ and $Z_{p,r}$, respectively. Let $\tilde T$ be the positive closed (1,1)-current defined by

$$\tilde T:=\pi^*T\,\,\,\text{on}\,\,\,\pi^{-1}(\Xreg).$$

By assumption, $\tilde T$ has locally bounded mass around $\pi^{-1}(\Xsing)$ hence by Theorem \ref{El Mir} the extension by zero of $\tilde T$ is a positive closed (1,1)-current on $\tilde X$. On the other hand, since $\pi:\pi^{-1}(\Xreg)\to\Xreg$ is a biholomorphism we can conclude that $\nu(\tilde T,\tilde Z_{p,r})=\nu_{p,r}$ and $\nu(\tilde T,\tilde Y)=\beta$.\\

We know by Step 2 that if $\tilde\omega$ is the Fubini-Study metric on $\tilde X$ we can find a positive constant $C$ depending only on $\tilde X$, $\tilde Y$ and $\tilde\omega$ such that

$$C\geq\sum_{r\geq1}(\nu_{p,r}-\beta)^p\int_{\tilde Z_{p,r}}\tilde\omega^{\dim(X)-l-p}.$$

We prove the following lemma

\begin{lem}
Let $A$ be an ample line bundle defined on $X$ and $\tilde\omega$ the Fubini-Study metric on $\tilde X$. Then, there exist $\delta>0$ depending only on $\tilde\omega$ and $A$ such that for every irreducible algebraic set $Z\subset X$ not contained in $\Xsing$ of dimension $q$ and strict transform $\tilde Z$ the following holds

$$\int_{\tilde Z}\tilde\omega^{q}\geq \delta (A^q\cdot Z).$$
\end{lem}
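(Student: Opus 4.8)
The plan is to reduce the statement to a comparison between the Fubini--Study form $\tilde\omega$ on $\tilde X$ and the pullback $\pi^*\omega_A$ of a positive metric on $A$. The key observation is that both sides of the inequality are cohomological in nature: $\int_{\tilde Z}\tilde\omega^q$ depends only on the class $\{\tilde\omega\}^q$ paired with the fundamental class $[\tilde Z]$, and since $\pi$ restricted to $\tilde Z$ is birational onto $Z$ (because $Z\not\subset\Xsing$), we have the projection-formula identity $\int_{\tilde Z}\pi^*\omega_A^q=(A^q\cdot Z)$. So the content is to find a uniform $\delta>0$, independent of $Z$, with $\{\tilde\omega\}^q\cdot[\tilde Z]\geq\delta\,\{\pi^*\omega_A\}^q\cdot[\tilde Z]$.

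First I would fix an embedding $\tilde X\hookrightarrow\P^M$ defining $\tilde\omega$ and a very ample line bundle $\OO_{\tilde X}(1)$ with $c_1=\{\tilde\omega\}$; likewise fix $A$ ample on $X$ with a smooth positive curvature form $\omega_A$. Since $\pi^*A$ is nef on $\tilde X$ and $\{\tilde\omega\}$ is ample (in fact Kähler), there exists an integer $N\geq1$ such that $N\{\tilde\omega\}-\pi^*\{A\}$ is still represented by a Kähler form on $\tilde X$ — this is just the fact that the ample cone is open and $\pi^*\{A\}$ lies in the closed pseudoeffective (indeed nef) cone, so $N\{\tilde\omega\}-\pi^*\{A\}\in\KK(\tilde X)$ for $N\gg0$; crucially $N$ depends only on $\tilde\omega$ and $A$, not on $Z$. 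Then $\pi^*\{A\}=N\{\tilde\omega\}-\eta$ with $\eta$ a Kähler class, and expanding $\pi^*\{A\}^q=(N\{\tilde\omega\}-\eta)^q$ and pairing with the \emph{effective} class $[\tilde Z]$, every mixed term $\{\tilde\omega\}^{q-i}\cdot\eta^i\cdot[\tilde Z]$ with $i\geq1$ is nonnegative (a product of nef classes against an effective cycle), so
$$(A^q\cdot Z)=\int_{\tilde Z}\pi^*\omega_A^q=(N\{\tilde\omega\}-\eta)^q\cdot[\tilde Z]\leq N^q\int_{\tilde Z}\tilde\omega^q,$$
giving the claim with $\delta:=N^{-q}$. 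Since $q\leq\dim(\tilde X)=\dim(X)$ is bounded, one may even take a single $\delta=N^{-\dim X}$ working for all $q$ (using that $\int_{\tilde Z}\tilde\omega^q\geq\int_{\tilde Z}\tilde\omega^q$ trivially and $N\geq1$ so $N^{-\dim X}\leq N^{-q}$).

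The main obstacle, and the point deserving care, is justifying that $\int_{\tilde Z}\pi^*\omega_A^q$ really equals the intersection number $(A^q\cdot Z)$ computed on $X$: this uses that $\pi|_{\tilde Z}:\tilde Z\to Z$ is generically one-to-one (true precisely because $Z$ is not contained in the exceptional locus $\Xsing$ of $\pi$, so its strict transform dominates it birationally), together with the pushforward/projection formula $\pi_*[\tilde Z]=[Z]$ and $\int_{\tilde Z}\pi^*\alpha=\int_Z\alpha$ for a smooth form; when $Z$ is singular one interprets $(A^q\cdot Z)$ via $\int_{Z_{\reg}}\omega_A^q$ or equivalently via a further resolution, and the identity is standard. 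The nonnegativity of the mixed terms $\{\tilde\omega\}^{q-i}\cdot\eta^i\cdot[\tilde Z]$ is clear since $\tilde\omega$ and any representative of $\eta$ are closed positive forms and $[\tilde Z]$ is a positive current, so all these integrals are of a closed positive $(q,q)$-form wedged against the positive current $[\tilde Z]$ — manifestly $\geq0$. Once the lemma is in hand, combining it with the Step~2 bound $C\geq\sum_{r\geq1}(\nu_{p,r}-\beta)^p\int_{\tilde Z_{p,r}}\tilde\omega^{\dim(X)-l-p}$ and the equality $\int_{Z_{p,r}}\omega^{\dim(X)-l-p}=(\omega^{\dim(X)-l-p}\cdot Z_{p,r})$ on $X$ yields Theorem~\ref{maintheorem}, after absorbing $\delta$ into the constant.
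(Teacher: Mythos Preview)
Your approach is essentially the paper's: pick $\epsilon>0$ (your $1/N$) so that $\{\tilde\omega\}-\epsilon\,\pi^*c_1(A)$ is K\"ahler, then use nonnegativity of products of nef classes against the effective cycle $[\tilde Z]$, and finally take $\delta=\epsilon^{\dim X}$. The projection--formula identity $\int_{\tilde Z}(\pi^*c_1(A))^q=(A^q\cdot Z)$ is used in the same way.

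There is, however, a logical slip in your binomial step. Expanding $(N\{\tilde\omega\}-\eta)^q$ produces terms with alternating signs $(-1)^i\binom{q}{i}N^{q-i}\{\tilde\omega\}^{q-i}\cdot\eta^i$, so the mere nonnegativity of each $\{\tilde\omega\}^{q-i}\cdot\eta^i\cdot[\tilde Z]$ does \emph{not} give $(N\{\tilde\omega\}-\eta)^q\cdot[\tilde Z]\leq N^q\{\tilde\omega\}^q\cdot[\tilde Z]$ (for $q=2$ this would be asking that $N^2a-2Nb+c\leq N^2a$ knowing only $a,b,c\geq0$). The fix is immediate and is exactly what the paper does: expand the \emph{other} identity $N\{\tilde\omega\}=\pi^*\{A\}+\eta$, so that
\[
N^q\int_{\tilde Z}\tilde\omega^q=\int_{\tilde Z}\bigl(\pi^*c_1(A)+\eta\bigr)^q=\int_{\tilde Z}(\pi^*c_1(A))^q+\sum_{i=1}^{q}\binom{q}{i}\int_{\tilde Z}(\pi^*c_1(A))^{q-i}\wedge\eta^{\,i}\geq (A^q\cdot Z),
\]
where now every summand is a product of nef classes against an effective cycle, hence $\geq0$. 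With this correction your argument is complete and coincides with the paper's (setting $\epsilon=1/N$, $\alpha=\eta/N$).
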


\begin{proof}[Proof of Lemma]

First observe that 

$$(A^q\cdot Z)=\int_{Z}c_1(A)^{q}=\int_{\pi_*\tilde Z}c_1(A)^{q}=\int_{\tilde Z}\pi^*(c_1(A)^{q})=\int_{\tilde Z}(\pi^*c_1(A))^{q}.$$ Since $\tilde\omega$ is positive, we can find $\epsilon>$ small enough such that the class of $\{\alpha\}:=\{\tilde\omega\}-\epsilon\pi^*c_1(A)$ is numerically effective (even ample) on $\tilde X$. Then

\begin{multline*}
\int_{\tilde Z}\tilde\omega^q=\int_{\tilde Z}(\epsilon\pi^*c_1(A)+\alpha)^q=\\=\int_{\tilde Z}(\epsilon\pi^*c_1(A))^q+\sum_{i=0}^{q-1}\binom{q}{i}\int_{\tilde Z}(\epsilon\pi^*c_1(A))^i\wedge\alpha^{q-i}\geq\\\geq \epsilon^q\int_{\tilde Z}(\pi^*c_1(A))^q\geq \delta(A^q\cdot Z),
\end{multline*}
where $\delta:=\epsilon^{\dim(X)}$. This proves the lemma.

\end{proof}

Now picking $\tilde\omega$ on $\tilde X$ and $\delta>0$ as above, and taking $A=\OO_X(1)$ the theorem follows since

$$C':=C\delta^{-1}\geq \sum_{r\geq1}(\nu_{p,r}-\beta)^p\delta^{-1}\int_{\tilde Z_{p,r}}\tilde\omega^{p+l}\geq \sum_{r\geq1}(\nu_{p,r}-\beta)^p\int_{Z_{p,r}}\omega^{p+l}.$$

This completes the proof of the Main Theorem.

\newpage


\def\lasp{\leavevmode\raise.45ex\hbox{$\lhook$}}

\end{document}